\newcommand*\Laplace{\mathop{}\!\mathbin\bigtriangleup}
\newcommand{\K}{\mathbb{K}}
\newcommand{\R}{\mathbb{R}}
\newcommand{\C}{\mathbb{C}}
\newcommand{\N}{\mathbb{N}}
\newcommand{\A}{\mathcal{A}}
\newcommand{\U}{\mathcal{U}}
\newcommand\dual[2]{{\big\langle #1}{,#2\big\rangle }}
\renewcommand{\S}{\mathcal{S}}
\renewcommand{\L}{\mathcal{L}}
\newcommand{\B}{\mathcal{B}}
\renewcommand{\d}{\mathrm{d}}
\renewcommand{\Re}{\operatorname{Re}}
\newcommand{\fra}{\mathfrak{a}}
\renewcommand{\mid}{\, \vert \,}
\renewcommand{\MR}{\textit{MR}\,}
\DeclareMathOperator{\ran}{Ran}
\newcommand\ra{\rightarrow}
\newcommand\lra{\longrightarrow}
\newcommand\restr[2]{{#1}{_{|#2}}}
\theoremstyle{plain}
\newtheorem{theorem}{Theorem}[section]
\newtheorem{proposition}[theorem]{Proposition}
\newtheorem{lemma}[theorem]{Lemma}
\theoremstyle{definition}
\newtheorem{definition}[theorem]{Definition}
\newtheorem{remark}[theorem]{Remark}
\newtheorem{notation}[theorem]{Notation}
\begin{document}
	\title{Regularity properties for evolution family governed by non-autonomous forms}
	\author{
		 Hafida Laasri, Hagen
	}
	
	\address{Lehrgebiete Analysis, Fakult\"at Mathematik und Informatik, Fernuniversit\"at in Hagen, D-58084 Hagen, Germany}
	\email{hafida.laasri@feruni-hagen.de}
	
	\maketitle

\begin{abstract}
This paper gives further regularity properties of the evolution family associated with a non-autonomous evolution equation 
\begin{equation*}\label{Abstract equation}
\dot u(t)+\A(t)u(t)=f(t),\ \ t\in[0,T],\ \
u(0)=u_0,
\end{equation*}
where $\A(t),\ t\in [0,T],$  arise from non-autonomous sesquilinear forms $\fra(t,\cdot,\cdot)$ on a Hilbert space $H$ with constant domain $V\subset H.$ Results on norm continuity, compactness and results on the \textit{Gibbs} character of the evolution family are established. The  abstract results are applied to the Laplacian operator with time dependent Robin boundary conditions.
\end{abstract}



\section*{Introduction\label{s1}}
 \noindent Let $H,V$ be two Hilbert spaces such that 
 $V$ is continuously and densely embedded into 
 $H$ and consider  a  non-autonomous  sesquilinear form  $\fra:[0,T]\times V \times V \to \K$ with 
  
\begin{equation*}\label{eq:continuity-nonaut}
| \fra(t,u,v) | \le M \|u\|_V \|v\|_V \text{ and } \ \Re~ \fra (t,u,u) \ge \alpha \|u\|^2_V
\end{equation*}
for all $t\in[0,T], v\in V$ and for some $\alpha, M>0.$ For each $t\in[0,T]$ we associate with $\fra(t,\cdot,\cdot)$ a unique operator $\A(t)\in \L(V,V')$  such that
\[\fra(t,u,v)=\langle\A(t)u,v\rangle \quad\hbox{ for all } u,v\in V.\]
\par\noindent The non-autonomous Cauchy problem
\begin{equation}\label{Abstract Cauchy problem 0}
\dot{u} (t)+\A(t)u(t)=f(t), \quad u(0)=u_0
\end{equation}
is said to have \textit{$L^2$-maximal regularity in $H$} if for every $f\in L^2(0,T,H)$ and $u_0\in V$ there exists a unique function $u$ belonging  to $L^2(0,T;V)\cap H^1(0,T;H)$ such that $u$ satisfies  (\ref{Abstract Cauchy problem 0}). Maximal regularity is a very important aspect in ongoing research in theory of non-autonomous evolution equations and addressed by many authors. This is due to its applicability in the prove of existence and regularity of solutions. More interestingly, maximal regularity can be used to solve nonlinear evolution equations by means of fixed point theorems. Considering (\ref{Abstract Cauchy problem 0}) on $V',$ in 1961 J. L. Lions proved the following well known result regarding \textit{$L^2$-maximal regularity in $V'$}:

\begin{theorem}[Lions]\label{wellposedness in V'2}
	Given $f\in L^2(0,T;V^\prime)$ and $u_0\in H,$ the problem (\ref{Abstract Cauchy problem 0}) has a unique solution $u \in \MR(V,V'):=L^2(0,T;V)\cap H^1(0,T;V').$ 
\end{theorem}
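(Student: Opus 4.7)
The plan is to apply the \emph{Lions representation theorem}: if $\mathcal{E}$ is a Hilbert space, $\Phi\subset\mathcal{E}$ is a pre-Hilbert subspace with continuous embedding, and $E\colon\mathcal{E}\times\Phi\to\K$ is sesquilinear with $E(\cdot,\phi)\in\mathcal{E}'$ for each $\phi$ and $|E(\phi,\phi)|\ge\alpha\|\phi\|_\Phi^2$, then every continuous antilinear form $L$ on $\Phi$ admits a representer $u\in\mathcal{E}$ with $L(\phi)=E(u,\phi)$ for all $\phi\in\Phi$. First, I would recast (\ref{Abstract Cauchy problem 0}) variationally: via integration by parts, a candidate $u\in\MR(V,V')$ solves the problem iff for every sufficiently smooth $v$ with $v(T)=0$,
\begin{equation*}
\int_0^T\fra(t,u(t),v(t))\,\d t-\int_0^T(u(t),\dot v(t))_H\,\d t=\int_0^T\langle f(t),v(t)\rangle\,\d t+(u_0,v(0))_H.
\end{equation*}

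Second, I would set $\mathcal{E}:=L^2(0,T;V)$ and take the pre-Hilbert subspace
\begin{equation*}
\Phi:=\{v\in\mathcal{E}\ :\ \dot v\in L^2(0,T;V'),\ v(T)=0\},\qquad \|v\|_\Phi^2:=\|v\|_{L^2(0,T;V)}^2+\|v(0)\|_H^2,
\end{equation*}
which embeds continuously into $\mathcal{E}$ thanks to the well-known embedding $\{v\,:\,\dot v\in L^2(0,T;V')\}\hookrightarrow C([0,T];H)$. Define $E$ and $L$ by the right- and left-hand sides of the identity above (with $u\in\mathcal{E}$ free and $v\in\Phi$). Continuity of $E(\cdot,v)$ on $\mathcal{E}$ is immediate from the uniform boundedness of $\fra$. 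The crucial coercivity estimate
\begin{equation*}
\Re E(v,v)\ \ge\ \alpha\|v\|_{L^2(0,T;V)}^2+\tfrac12\|v(0)\|_H^2\ \ge\ \min(\alpha,\tfrac12)\|v\|_\Phi^2
\end{equation*}
combines the $V$-coercivity of $\fra(t,\cdot,\cdot)$ with the integration-by-parts identity $-\Re\int_0^T(v(t),\dot v(t))_H\,\d t=\tfrac12\|v(0)\|_H^2$, valid for $v\in\Phi$.

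Third, applying Lions' representation theorem produces $u\in\mathcal{E}$ with $E(u,v)=L(v)$ for all $v\in\Phi$. Testing against $v\in C_c^\infty(0,T;V)\subset\Phi$ shows that $\dot u=f-\A(\cdot)u$ in $\D'(0,T;V')$; since the right-hand side lies in $L^2(0,T;V')$, one gets $u\in H^1(0,T;V')$ and hence $u\in\MR(V,V')$. Reintegrating by parts in the full variational identity produces a boundary contribution $(u(0)-u_0,v(0))_H$, and the free choice of $v(0)\in V$ forces $u(0)=u_0$ (using that $V$ is dense in $H$). Uniqueness follows from the energy identity: if $f=0$, $u_0=0$, then $\tfrac{d}{dt}\|u(t)\|_H^2=2\Re\langle\dot u(t),u(t)\rangle=-2\Re\fra(t,u(t),u(t))\le -2\alpha\|u(t)\|_V^2\le 0$, so $u\equiv 0$.

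The main obstacle I anticipate is the choice of $\Phi$ and its pre-Hilbert norm so that both continuous embedding $\Phi\hookrightarrow\mathcal{E}$ and coercivity of $E$ hold simultaneously; the correct choice (including the $\|v(0)\|_H^2$ term) is precisely what makes the initial datum $u_0$ representable as an antilinear form on $\Phi$ and, subsequently, recoverable from the variational identity. Everything else — the distributional interpretation of $\dot u$, continuity of $u$ into $H$, and the final energy estimate — is routine once this step is in place.
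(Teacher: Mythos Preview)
Your proposal is correct and follows precisely the approach the paper attributes to Lions: the paper does not give its own proof of this theorem but explicitly states that it ``has been proved by Lions in \cite{Lio61} using a representation theorem of linear functionals, known in the literature as \textit{Lions's representation Theorem}'', which is exactly the abstract lemma you invoke. The paper also mentions alternative routes (Galerkin approximation in \cite{DL88}, construction of a fundamental solution in \cite{Tan79}, and an integral-product-of-semigroups argument in \cite{LASA14}), but your choice matches the original and canonical one.

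One cosmetic point: in your variational identity you write $(u(t),\dot v(t))_H$, but since $\dot v\in L^2(0,T;V')$ only, this should be the duality pairing $\langle\dot v(t),u(t)\rangle$; the computation and the coercivity estimate go through unchanged with this correction.
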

Theorem \ref{wellposedness in V'2} has been proved by Lions in \cite{Lio61} using a  representation theorem
of linear functionals, known in the literature as \textit{Lions's representation Theorem}. Other proofs of Theorem \ref{wellposedness in V'2} can be found in \cite[ XVIII
Chapter 3, p. 620]{DL88} 
where a Galerkin's method is used or in \cite[Section 5.5]{Tan79} where  a fundamental solution has been constructed. Furthermore, the author gave an alternative proof in \cite[Section 2]{LASA14} using the approach of the integral product of semigroups
developed in \cite{ELKELA11, ELLA13, ELLA15}.
\par\noindent Lions' theorem requires  only  measurability of  $t\mapsto \fra(t,u,v)$  for all $u,v\in V$. However, in applications to boundary problems maximal regularity in $V'$ is not sufficient because it is only the part $A(t)$ of $\A(t)$ in $H$ that realizes the boundary conditions in question. Precisely one is more interested on \textit{$L^2$-maximal regularity in $H,$} i.e., the solution $u$ of (\ref{Abstract Cauchy problem 0}) belong to $H^1(0,T;H)$ if $f\in L^2(0,T; H)$ and $u_0\in V.$ The problem of $L^2$-maximal regularity in $H$ was initiated  by Lions in \cite[p.\ 68]{Lio61} for $u_0=0$ and $\fra$ is symmetric. In general, we have to impose more regularity on the form $\fra$ then measurability of the form is not sufficient \cite{D1,ADF17}. However, under additional regularity assumptions on the form $\fra,$   the
initial value $u_0$ and the inhomogeneity $f,$ some positive results were already done by Lions in
\cite[p.~68, p.~94, ]{Lio61}, \cite[Theorem~1.1, p.~129]{Lio61} and
\cite[Theorem~5.1, p.~138]{Lio61} and by Bardos \cite{Bar71}. More recently, this
problem has been studied  with some progress and different
approaches \cite{ADLO14, Ar-Mo15, O15, D2, OH14, OS10, ELLA15, Di-Za16, Au-Eg16, Fa17}. Results on multiplicative perturbation are established in \cite{ADLO14, D2, AuJaLa14}. See also the recent review paper \cite{ADF17} for more details and references.
 \medskip
 
 In this paper we are mainly interested by further regularity of the \textit{evolution family} generated by $\A(t), t\in [0,T].$  Recall that it is well known that, under suitable conditions, the solution of a non-autonomous linear evolution equation  may  be given by a  strongly continuous evolution family \[\Big\{U(t,s):\  0\leq s\leq t\leq T\Big\}\subset \L(H)\] i.e., a family that has the following properties:
 \begin{itemize}
 	\item[$(i)$] $U(t,t)=I$ and $U(t,s)= U(t,r)U(r,s)$ for every $0\leq r\leq s\leq t\leq T,$ 
 	\item [$(ii)$] for every $x\in H$ the function $(t,s)\mapsto U(t,s)x$ is continuous into $H$ for $0\leq s\leq t\leq T.$
 \end{itemize}
 In the autonomous case, i.e., if $\fra(t,\cdot,\cdot)=\fra_0(\cdot,\cdot)$ for all $t\in[0,T],$ then one knows that $-A_0,$ the operator associated with $\fra_0$ in $H,$ generates a holomorphic $C_0$-semigroup  $(T(t))_{t\geq 0}$ in $H.$ In this case $U(t,s):=T(t-s)$ yields a strongly continuous evolution family on $H.$  
 
 \par \noindent The study of regularity properties of the evolution family with respect to $(t.s)$ in general Banach spaces has been investigated in the Literature in the case of constant domains by Komatsu \cite{H.Ko61}  and Lunardi \cite{Lu89} and by Acquistapace \cite{Ac88} for the case of time-dependent domains. 
 \medskip
 \par\noindent  Consider a non-autonomous sesquilinear form  $\fra(t,.,.): V\times V\lra \C$ form such that there exists  $0\leq \gamma<1$ and a continuous function $\omega:[0,T]\longrightarrow [0,+\infty)$ with
	\begin{equation*}\label{eq: thm Arendt-Monniaux}\sup_{t\in[0,T]} \frac{\omega(t)}{t^{\gamma/2}}<\infty \quad \text{ and }
	\int_0^T\frac{\omega(t)}{t^{1+\gamma/2}}<\infty
	\end{equation*}
	such that
	\begin{equation*}
	|\fra(t,u,v)-\fra(s,u,v)| \le\omega(|t-s|) \Vert u\Vert_{V} \Vert v\Vert_{V_\gamma}\quad \ (t,s\in[0,T], u,v\in V),
	\end{equation*}
	where $V_\gamma:=[H,V]$ is the complex interpolation space.
	 Under this assumptions, (\ref{Abstract Cauchy problem 0}) has $L^2$-maximal regularity in $H$ provided $\fra(0; \cdot,\cdot)$ has the square root property by a recent results of Arendt and Monniaux \cite{Ar-Mo15}. In this paper we push their analysis forward by discussing additional regularity properties of the solution. 

\noindent 
In Section \ref{Sec2 Norm continuity} we establishes that in this case the solution of the non-autonomous Cauchy problem (\ref{Abstract Cauchy problem 0}) is governed by a strongly continuous evolution family $\U(\cdot,\cdot)$ in $H$ such that    
the mapping $(t,s)\mapsto \U(t,s)$ is norm continuous with value in $\L(V)$ for $0\leq s<t\leq T.$ If in addition the embedding $V\subset H$ is compact then we show in Section \ref{Sec3 Gipps property} that the operator $\U(t,s)$ is compact with value in $\L(V),$ respectively in  $\L(H),$ for each $0\leq s<t\leq T.$ This implies, in particular, that  $(t,s)\mapsto \U(t,s)$ is also norm continuous with value in $\L(H)$ for $0\leq s<t\leq T$ provided that $V\subset H$ is compact. An other important result of Section \ref{Sec3 Gipps property} concerns the Gibbs character of the evolution family $\U(\cdot,\cdot).$ Recall that the trace class operators is a special case of $p$-Schatten class operators $\S_p(H)$ when $p=1.$ Then we call that an evolution family is \textit{Gibbs} if   $\U(t,s)$ is a trace operator for each $0\leq s<t\leq T$. In the autonomous situation, the concept of Gibbs semigroups has been introduced by Dietrich A. Uhlenbrock \cite{Uh71}, and it is known that this class play an important role in spectral theory and mathematics physics. In Section \ref{Sec3 Gipps property} we show that the evolution family associated with (\ref{Abstract Cauchy problem 0}) is a Gibbs evolution family provided that there exists some $p\in (1,\infty)$ such that the embedding $V\subset H$ is a $p$-Schatten operator.  We apply our abstract results in Section \ref{S application} to parabolic equations governed by the Laplacian operator with time dependent Robin boundary conditions. 
\section{Preliminary results \label{Approximation}}

Throughout this paper $H,V$ are two Hilbert spaces over $\mathbb C$ such that  $V \underset d \hookrightarrow H;$ i.e., $V$ is  densely embedded into $H$ and
\begin{equation*}\label{eq:V_dense_in_H}
\|u\| \le c_H \|u\| _V \quad (u \in V)
\end{equation*}
for some constant $c_H>0.$ Let $V'$ denote  the antidual of $V.$  The duality
between $V'$ and $V$ is denoted by $\langle ., . \rangle$. As usual, by identifying $H$ with  $H',$ we have $V\hookrightarrow H\cong H'\hookrightarrow V'.$  These embeddings are continuous and
\begin{equation*}\label{eq:H_dense_in_V'}
\|f\|_{V'} \le c_H \|f\|  \quad (f \in V')
\end{equation*}
see e.g., \cite{Bre11}. We denote by $(\cdot \mid \cdot)_V$ the scalar product and $\|\cdot\|_V$ the norm
on $V$ and by $(\cdot \mid \cdot), \|\cdot\|$ the corresponding quantities in $H.$  Let $
\fra: [0,T]\times V\times V \to \C$ be a continuous and coercive non-autonomous sesquilinear form, i.e., $\fra(.,u,v)$ is measurable,
\begin{equation}\label{eq:continuity-nonaut}
|\fra(t,u,v)| \le M \Vert u\Vert_V \Vert v\Vert_V
\end{equation}
and
\begin{equation}\label{eq:Ellipticity-nonaut}
\Re ~\fra (t,u,u)\ge \alpha \|u\|^2_V 
\end{equation}
for some constants  $\alpha, M> 0$ and for all $t\in [0,T], u,v\in V.$
By the Lax-Milgram theorem, for each $t\in[0,T]$ there exists an isomorphism $\A(t):V\to V^\prime$ such that
$\langle \A(t) u, v \rangle = \fra(t,u,v)$ for all $u,v \in V.$ It is well known that $-\A(t),$
regarding as unbounded operator with domain $V,$  generates a bounded holomorphic semigroup $e^{-\cdot\A(t)}$  of angle $\theta:=\frac{\pi}{2}-\arctan(\frac{M}{\alpha})$ on $V'$. We call $\A(t)$ the operator associated with $\fra(t,\cdot,\cdot)$ on $V^\prime.$ We have also to consider the operator $A(t)$ associated with $\fra(t,\cdot,\cdot)$ on  $H:$
\begin{align*}
D(A(t)) := {}& \{ u\in V : \A(t) u \in H \}\\
A(t) u = {}& \A(t) u.
\end{align*}
i.e., $A(t)$ is the part of $\A(t)$ in $H.$ Then  $-A(t)$ generates a holomorphic $C_0$-semigroup (of angle $\theta$) $e^{-\cdot A(t)}$ on $H$ which is the restriction to $H$ of $e^{-\cdot A(t)},$ and we have
\begin{equation}\label{analytic representation}
e^{-\cdot A(t)}=\frac{1}{2i\pi}\int_\Gamma e^{\cdot\mu} (\mu+A(t))^{-1}{\rm d}\mu
\end{equation}
where $\Gamma:=\{re^{\pm \varphi}:\ r>0\}$ for some fixed $\varphi\in (\theta,\frac{\pi}{2})$
(see e.g. \cite[Lecture 7]{Ar06},\cite{Ka},\cite[Chapter 1]{Ou05} or \cite[Chap.\ 2]{Tan79}).
\par 
We assume in addition, that there exists $0\leq \gamma< 1$ and a continuous function   $\omega:[0,T]\longrightarrow [0,+\infty)$ with
\begin{equation}\label{eq 1:Dini-condition}  \sup_{t\in[0,T]} \frac{\omega(t)}{t^{\gamma/2}}<\infty, \end{equation}
\begin{equation}\label{eq 2:Dini-condition}
\int_0^T\frac{\omega(t)}{t^{1+\gamma/2}} {\rm d}t<\infty
\end{equation}
and
\begin{equation}\label{eq 3:Dini-condition}
|\fra(t,u,v)-\fra(s,u,v)| \le\omega(|t-s|) \Vert u\Vert_{V} \Vert v\Vert_{V_\gamma}
\end{equation}
for all $t,s\in[0,T]$ and for all $u,v\in V$ where $V_\gamma:=[H,V]_\gamma$ is the complex interpolation space.
Note that

\[V\hookrightarrow V_\gamma \hookrightarrow H\hookrightarrow V_\gamma'\hookrightarrow V'\] with continuous  embeddings.  
\begin{remark}\label{key rematk}
	Remark that conditions (\ref{eq 1:Dini-condition}) and (\ref{eq 2:Dini-condition}) implies that 
	\begin{equation}\label{Key rmk1:Dini-condition}
	\int_0^T\frac{\omega(t)^2}{t^{1+\gamma}} {\rm d}t<\infty
	\end{equation}
	and for each $\varepsilon>0$ there exists $\delta_0>0$ such that 
	\begin{equation}\label{Key rmk2:Dini-condition}
	\int_0^{\delta_0}\frac{\omega(t)}{t^{1+\gamma/2}} {\rm d}t<\varepsilon.
	\end{equation}
\end{remark}

\par The following proposition is of great interest for this paper.

\begin{proposition}\label{lemma: estimations for general from}\cite[Section 2]{Ar-Mo15} Let $b$ be any sesquilinear form that satisfies assumptions (\ref{eq:continuity-nonaut})-(\ref{eq:Ellipticity-nonaut}) with the same constants $M$ and $\alpha$ and let $\gamma\in [0,1[.$ Let $\B$ and $B$ be the associated operators  on $V'$ and $H,$ respectively. Then the following estimates holds 

	\begin{enumerate}
		\item \label{Eq1: estimation resolvent}$\displaystyle \|(\lambda-\B)^{-1}\|_{\L(V_{\gamma}',H)}\leq \frac{c}{(1+\mid\lambda\mid)^{1-\frac{\gamma}{2}}},$
		\item \label{Eq2: estimation resolvent}$\displaystyle \|(\lambda-\B)^{-1}\|_{\L(V)}\leq \frac{c}{1+\mid\lambda\mid},$
		\item \label{Eq3: estimation resolvent}$\displaystyle \|(\lambda-\B)^{-1}\|_{\L(H,V)}\leq \frac{c}{(1+\mid\lambda\mid)^{\frac{1}{2}}},$
		\item \label{Eq4: estimation resolvent} $\displaystyle \|(\lambda-\B)^{-1}\|_{\L(V',H)}\leq \frac{c}{(1+\mid\lambda\mid)^{\frac{1}{2}}},$
	\item \label{Eq5: estimation resolvent} $\displaystyle \|(\lambda-\B)^{-1}\|_{\L(V',V)}\leq c,$
		\item\label{Eq6: estimation resolvent} $\displaystyle \|(\lambda-\B)^{-1}\|_{\L(V'_\gamma,V)}\leq \frac{c}{(1+\mid\lambda\mid)^{\frac{1-\gamma}{2}}},$
		\item \label{analytic estimation gamma=1} $\|e^{-s\B}\|_{\L(V',V)}\leq \displaystyle\frac{c}{s}$
		\item \label{Eq2: estimation  semigroup}
		$\displaystyle\|e^{-s\B}\|_{\L(V_\gamma',H)}\leq\frac{c}{s^{\gamma/2}},$
		\item \label{Eq3: estimation  semigroup}
		$\displaystyle\|e^{-sB}\|_{\L(V_\gamma',V)}\leq\frac{c}{s^{\frac{1+\gamma}{2}}},$
		\item \label{analytic estimation}
		$\displaystyle\|Be^{-sB}\|_{\L(H)}\leq \frac{c }{s},$
		\item \label{analytic estimation in V}
		$\displaystyle\|e^{-sB}\|_{\L(V)}\leq c$
	\end{enumerate}
	for each $s\geq 0, \lambda\notin \Sigma_\theta:=\{re^{i\varphi}: r>0, |\varphi|<\theta\}$ and for some constant $c>0$ which depend only  on $M,\alpha, \gamma$ and $c_H.$ 
	
\end{proposition}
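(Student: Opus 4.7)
The plan is to establish the eleven estimates in two layers: first the six resolvent bounds (\ref{Eq1: estimation resolvent})--(\ref{Eq6: estimation resolvent}), then the five semigroup bounds (\ref{analytic estimation gamma=1})--(\ref{analytic estimation in V}) via the Dunford-type representation (\ref{analytic representation}).

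On the resolvent side, the building blocks come from coercivity alone. Writing $u=(\lambda-\B)^{-1}f$ and pairing with $u$ gives $\lambda\|u\|^2 - b(u,u) = \langle f,u\rangle$; rotating by a suitable $e^{i\varphi}$ so that $\Re(e^{i\varphi}\lambda)\ge \delta|\lambda|$ while $-\Re(e^{i\varphi}b(u,u))\ge c\,\alpha\|u\|_V^2$ (possible since $\lambda\notin\Sigma_\theta$ and the numerical range of $b$ lies in $\Sigma_\theta$) yields the master inequality
$$ \alpha\|u\|_V^2 + (1+|\lambda|)\|u\|^2 \le c\,|\langle f,u\rangle|. $$
Pairing $f\in V'$ with $\|u\|_V$ gives (\ref{Eq5: estimation resolvent}); pairing $f\in H$ with $\|u\|$ and $\|u\|_V$ gives the standard $\L(H)$-bound $c/(1+|\lambda|)$ together with (\ref{Eq3: estimation resolvent}); pairing $f\in V'$ with $\|u\|$, combined with (\ref{Eq5: estimation resolvent}), gives (\ref{Eq4: estimation resolvent}). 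Estimate (\ref{Eq2: estimation resolvent}) for $\L(V)$ does not come from the master inequality directly; I would obtain it by duality, since the Banach-space adjoint of $(\lambda-\B)^{-1}\colon V\to V$ is $(\lambda-\B^*)^{-1}\colon V'\to V'$, and the adjoint form $b^*(u,v):=\overline{b(v,u)}$ satisfies (\ref{eq:continuity-nonaut})--(\ref{eq:Ellipticity-nonaut}) with the same $M,\alpha$, so the $\L(V')$-bound for $\B^*$ transfers to the $\L(V)$-bound for $\B$. The two remaining estimates (\ref{Eq1: estimation resolvent}) and (\ref{Eq6: estimation resolvent}) are then pure complex interpolation: since $V_\gamma'=[H,V']_\gamma$, interpolating the $\L(H)$- and $\L(V',H)$-bounds at parameter $\gamma$ produces (\ref{Eq1: estimation resolvent}) with exponent $(1-\gamma)+\gamma/2=1-\gamma/2$, and interpolating the $\L(H,V)$- and $\L(V',V)$-bounds produces (\ref{Eq6: estimation resolvent}) with exponent $(1-\gamma)/2$.

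For the semigroup bounds I would plug each resolvent estimate into a contour formula
$$ e^{-s\B} = -\frac{1}{2\pi i}\int_{\partial\Sigma_{\theta'}} e^{-s\lambda}(\lambda-\B)^{-1}\,d\lambda, \qquad \theta<\theta'<\pi/2, $$
so that $|e^{-s\lambda}|=e^{-sr\cos\theta'}$ on the contour. A resolvent bound $\|(\lambda-\B)^{-1}\|_{\L(X,Y)}\le c(1+|\lambda|)^{-\beta}$ then delivers, after the substitution $u=sr\cos\theta'$ and provided $\beta<1$, the semigroup bound $\|e^{-s\B}\|_{\L(X,Y)}\le c\,s^{\beta-1}$. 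Applying this with $\beta=0,\ 1-\gamma/2,\ (1-\gamma)/2$ gives (\ref{analytic estimation gamma=1}), (\ref{Eq2: estimation  semigroup}), (\ref{Eq3: estimation  semigroup}) respectively; (\ref{analytic estimation}) is obtained by differentiating under the contour integral, producing an extra factor of $\lambda$ that is absorbed by the $\L(H)$-resolvent decay. Only (\ref{analytic estimation in V}) is genuinely critical: with $\beta=1$ the contour integral diverges logarithmically as $s\to 0^+$, so I would instead deduce it from the sectoriality of $-\B$ on $V'$ (which gives $\|e^{-s\B}\|_{\L(V')}\le c$ as a bounded holomorphic semigroup) and transport the bound to $V$ by the same adjoint-form duality used for (\ref{Eq2: estimation resolvent}).

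The main obstacle I expect is bookkeeping: checking that the rotation trick in the master inequality yields the exponents in (\ref{Eq3: estimation resolvent})--(\ref{Eq5: estimation resolvent}) uniformly for all $\lambda\notin\Sigma_\theta$, verifying the duality identification $V_\gamma'=[H,V']_\gamma$ of complex interpolation spaces so that the exponents in (\ref{Eq1: estimation resolvent}) and (\ref{Eq6: estimation resolvent}) come out as stated, and justifying the adjoint-form duality step for (\ref{Eq2: estimation resolvent})/(\ref{analytic estimation in V}), which is legitimate only because the assumptions on $b$ pass to $b^*$ with identical constants $M$ and $\alpha$.
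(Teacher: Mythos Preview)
The paper does not prove this proposition at all; it is quoted from \cite[Section~2]{Ar-Mo15} and stated without argument. Your proposal is therefore not competing with anything in the present paper, and what you outline is essentially the standard route one finds in \cite{Ar-Mo15} and the surrounding literature: the master inequality from coercivity yields (\ref{Eq3: estimation resolvent})--(\ref{Eq5: estimation resolvent}) directly, complex interpolation along $V_\gamma'=[H,V']_\gamma$ gives (\ref{Eq1: estimation resolvent}) and (\ref{Eq6: estimation resolvent}), the adjoint-form duality handles the $\L(V)$ bounds (\ref{Eq2: estimation resolvent}) and (\ref{analytic estimation in V}), and the Dunford contour transfers resolvent decay to semigroup decay for (\ref{analytic estimation gamma=1})--(\ref{analytic estimation}). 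The exponents you compute are correct, and your identification of (\ref{analytic estimation in V}) as the case requiring a separate argument (bounded holomorphic semigroup on $V'$ plus duality, rather than the divergent $\beta=1$ contour integral) is exactly the right observation.

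One small caution on the duality step for (\ref{Eq2: estimation resolvent}): you need the $\L(V')$ resolvent bound for $\B^*$, which in turn requires knowing that $-\B^*$ generates a bounded holomorphic semigroup on $V'$ with constants depending only on $M,\alpha$. That is true, but it does not come out of the master inequality alone (which only gives decay $c/(1+|\lambda|)^{1/2}$ in $\L(V')$); it comes from the sectoriality of $\B^*$ on $V'$ with domain $V$, which is the standard form-operator result. You allude to this when you invoke ``sectoriality of $-\B$ on $V'$'' for (\ref{analytic estimation in V}), so just make sure the same ingredient is acknowledged for (\ref{Eq2: estimation resolvent}).
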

\begin{remark}\label{Remark: estimations for general from} All estimates in Proposition \ref{lemma: estimations for general from} holds for $\A(t)$ with constant independent of  $t\in[0,T],$ since $\fra$ satisfies (\ref{eq:continuity-nonaut})-(\ref{eq:Ellipticity-nonaut}) with the same constants $M$ and $\alpha,$ also $\gamma$ and $c_H$  does not depend on $t\in[0,T].$
\end{remark}
 If $b:V\times V\to \C$ is a coercive and bounded form and  $\B\in\L(V,V')$ the associated operator on $V',$ then one defines  $\B^{-1/2}\in \L(V')$ by 
 \begin{equation}\label{formula for square root} 
 \B^{-1/2}u:=\frac{1}{\pi}\int_0^\infty t^{-1/2}(t-\B)^{-1} {\d }t,\quad \text{ for all } u\in V',\end{equation}
 see \cite[(6.4), Section 2.6]{Pa} or \cite[(3.52), Section 3.8]{ABHN11}. The operator $\B^{-1/2}$ is also  given by the following formula
  \begin{equation}\label{formula for square root2}
   \B^{-1/2}u=\frac{1}{\sqrt{\pi}}\int_0^\infty t^{-\frac{1}{2}}e^{-t\B}u \d t,\quad \text{ for all } u\in V'\end{equation}
 see \cite[(6.9), Section 2.6]{Pa}. Moreover, $\B^{-1/2}$ is one-to-one and one defines $\B^{1/2}$ by 
 
\[ D(\B^{1/2}):=\ran (\B^{-1/2}) \ \text{ and } \ \B^{1/2}u=(\B^{-1/2})^{-1}u \quad \text{ for all } u\in  D(\B^{1/2}).\]
Let $B$ be the part of $\B$ in $H.$ Then $B^{-1/2}u=\B^{-1/2}u$ for each $u\in H$ and $B^{1/2}=(B^{-1/2})^{-1}$ with domain $D(B^{1/2})=\ran(B^{-1/2}).$ We recall that $b:V\times V\rightarrow \C$ has the \textit{square root property} if the following equivalent conditions are satisfied
	\begin{description}
		\item [\ $(i)$] $D(B^{1/2})=V.$
		\item [\ $(ii)$] $D(\B^{1/2})=H.$
\end{description}
 Conditions $(i)$ and $(ii)$ above are indeed equivalent since $\B^{-1/2}\B^{-1/2}=\B^{-1}$ is an isomorphism from $V'$ into $V.$ If a coercive and bounded sesquilinear form $b:V\times V\rightarrow \C$ has the square root property, then $\B^{-1/2}$ is a isomorphism from $V'$ into $H$ with inverse $\B^{1/2},$ and $B^{-1/2}$ is a isomorphism from $H$ into $V$ with inverse $B^{1/2}.$ Further, $B^{1/2}$ is  the part of $\B^{1/2}$ in $H.$

\begin{lemma}\label{lemma: Holder continuity for square} Assume that  $\fra:[0,T]\times V\times V\rightarrow \C$ satisfies  (\ref{eq:continuity-nonaut})-(\ref{eq 3:Dini-condition})  and  $D(A^{1/2}(t))=V$ for every $t\in [0,T].$ Then there exists a constant $c_0>0$  such that 
	 \begin{equation}\label{eq1 lemma: Holder continuity for square}\|\A^{-1/2}(t)-\A^{-1/2}(s)\|_{\L(V',H)}\leq c_0 w(|t-s|),\end{equation}
 
	  \begin{equation}\label{eq12 lemma: Holder continuity for square}\|A^{-1/2}(t)-A^{-1/2}(s)\|_{\L(H,V)}\leq c_0 w(|t-s|)\end{equation}
	and 
	\begin{equation}\label{eq2 lemma: Holder continuity for square} \|A^{1/2}(t)-A^{1/2}(s)\|_{\L(V,H)}\leq  c_0 w(|t-s|)\end{equation}
for all $t,s\in[0,T].$
\end{lemma}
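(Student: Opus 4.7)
All three estimates will be derived from the integral representation (\ref{formula for square root}) together with the second resolvent identity. The key observation is that the hypothesis (\ref{eq 3:Dini-condition}) reads as the operator bound
\[
\|\mathcal{A}(t)-\mathcal{A}(s)\|_{\L(V, V_\gamma')} \le \omega(|t-s|).
\]
Inserting this between two resolvents and invoking the pointwise bounds of Proposition \ref{lemma: estimations for general from}, which by Remark \ref{Remark: estimations for general from} are uniform in $t \in [0,T]$, each estimate reduces to the convergence of an integral of the form $\int_0^\infty r^{\sigma}(1+r)^{-(1-\gamma/2)-\tau}\,\d r$; this is guaranteed precisely by the assumption $0\le\gamma<1$.

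\textbf{Estimates (\ref{eq1 lemma: Holder continuity for square}) and (\ref{eq12 lemma: Holder continuity for square}).} Applying the second resolvent identity to (\ref{formula for square root}) gives
\[
\mathcal{A}^{-1/2}(t) - \mathcal{A}^{-1/2}(s) = \frac{1}{\pi}\int_0^\infty r^{-1/2}\,(r+\mathcal{A}(t))^{-1}[\mathcal{A}(s)-\mathcal{A}(t)](r+\mathcal{A}(s))^{-1}\,\d r.
\]
For (\ref{eq1 lemma: Holder continuity for square}), I read the integrand through the chain $V' \to V \to V_\gamma' \to H$ by combining item (5) of Proposition \ref{lemma: estimations for general from}, the bound above on the middle factor, and item (1); this yields the pointwise estimate $c^2\omega(|t-s|)(1+r)^{-(1-\gamma/2)}$. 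The integral $\int_0^\infty r^{-1/2}(1+r)^{-(1-\gamma/2)}\,\d r$ is then convergent at $0$ and, since $\gamma<1$, at infinity. For (\ref{eq12 lemma: Holder continuity for square}) I instead use the chain $H \to V \to V_\gamma' \to V$ via items (3) and (6); the product $(1+r)^{-1/2}(1+r)^{-(1-\gamma)/2}=(1+r)^{-(1-\gamma/2)}$ reproduces the same convergent kernel.

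\textbf{Estimate (\ref{eq2 lemma: Holder continuity for square}) and main obstacle.} Using $(r+A(\cdot))^{-1} A(\cdot) = I - r(r+A(\cdot))^{-1}$ inside the Balakrishnan formula for $A^{1/2}$, the singular $I$-terms cancel under subtraction, giving, for $u\in V$, the absolutely convergent formula
\[
A^{1/2}(t)u - A^{1/2}(s)u = -\frac{1}{\pi}\int_0^\infty r^{1/2}\,(r+\mathcal{A}(t))^{-1}[\mathcal{A}(s)-\mathcal{A}(t)](r+\mathcal{A}(s))^{-1}u\,\d r.
\]
Estimating this integrand in $\L(V,H)$ along $V \to V \to V_\gamma' \to H$ by items (2) and (1) of Proposition \ref{lemma: estimations for general from} gives the bound $c^2\omega(|t-s|)(1+r)^{-(2-\gamma/2)}$; multiplication by $r^{1/2}$ yields an integral of the same type as before, convergent for $\gamma<1$. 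The genuinely delicate point is the rigorous justification of this formula: the naive Balakrishnan integral for $A^{1/2}(t)$ alone is not absolutely convergent, so the cancellation must first be exploited on the dense subspace $D(A(t))\cap D(A(s))$, the absolute convergence of the resulting integral in $\L(V,H)$ established via the estimate above, and only then extended to all $u\in V$ by density using the square-root property $D(A^{1/2}(\cdot))=V$. Tracking the three arguments together produces a common constant $c_0$ depending only on $M$, $\alpha$, $\gamma$, $c_H$ and the square-root constants.
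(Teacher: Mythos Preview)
Your treatment of (\ref{eq1 lemma: Holder continuity for square}) and (\ref{eq12 lemma: Holder continuity for square}) is correct and essentially coincides with the paper's: both arguments insert the resolvent identity into the integral representation of $\A^{-1/2}$ and then read off the bound from Proposition~\ref{lemma: estimations for general from}. The only cosmetic difference is that you work directly with the resolvent formula (\ref{formula for square root}), whereas the paper starts from the semigroup formula (\ref{formula for square root2}) and then expresses $e^{-\tau\A(t)}-e^{-\tau\A(s)}$ by a contour integral, arriving at the same $(1+|\lambda|)^{-(1-\gamma/2)}$ kernel.

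For (\ref{eq2 lemma: Holder continuity for square}) the routes genuinely diverge. The paper does \emph{not} touch a Balakrishnan formula for the positive square root at all: it simply writes $A^{1/2}(t)u=\A^{-1/2}(t)\A(t)u$ for $u\in V$ (valid by the square root property), and then
\[
A^{1/2}(t)u-A^{1/2}(s)u=\A^{-1/2}(t)\big(\A(t)-\A(s)\big)u+\big(\A^{-1/2}(t)-\A^{-1/2}(s)\big)\A(s)u,
\]
so the estimate follows immediately from (\ref{eq1 lemma: Holder continuity for square}), the bound $\|\A(t)-\A(s)\|_{\L(V,V')}\le c\,\omega(|t-s|)$, and the finiteness of $\kappa:=\sup_r\|\A^{-1/2}(r)\|_{\L(V',H)}$. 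This is shorter and sidesteps all convergence issues.

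Your direct approach via the Balakrishnan representation is in principle workable and yields a constant that does not pass through $\kappa$, but the justification you sketch has a real gap: you invoke density of $D(A(t))\cap D(A(s))$ in $V$, yet nothing in the hypotheses guarantees this. The domains $D(A(t))$ vary with $t$ (only $D(A^{1/2}(t))=V$ is fixed), and for $u\in D(A(t))$ one has $\A(s)u=A(t)u+(\A(s)-\A(t))u\in H+V_\gamma'$, which need not lie in $H$; so there is no obvious reason the intersection is large. One can repair this---for instance by showing that the truncated Balakrishnan integrals $\frac{1}{\pi}\int_\delta^N r^{-1/2}[I-r(r+A(\cdot))^{-1}]u\,\d r$ converge to $A^{1/2}(\cdot)u$ for every $u\in V$ separately, and then passing to the limit in the difference---but this is additional work you have not carried out. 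The paper's factorization trick avoids the issue entirely.
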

\begin{proof}
	Let $u\in V'$ and $t,s\in[0,T].$ 
Then using (\ref{formula for square root2}) we obtain
	\begin{align*}
	 \A^{-1/2}(t)u-\A^{-1/2}(s)u&=\frac{1}{\sqrt{\pi}}\int_0^\infty \frac{1}{\sqrt\tau}\big(e^{-\tau \A(t)}u-e^{-\tau \A(s)}u\big)\d \tau
	 \\&=\frac{1}{\sqrt{\pi}}\int_0^\infty \frac{1}{2\pi i\sqrt\tau}\int_{\Gamma}e^{-\tau \lambda}(\lambda-\A(t))^{-1}(\A(t)-\A(s))(\lambda-\A(t))^{-1}u \d\lambda \d\tau
\end{align*}
By Proposition \ref{lemma: estimations for general from}-(\ref{Eq1: estimation resolvent}), (\ref{Eq5: estimation resolvent}) and assumption (\ref{eq 3:Dini-condition}) 
\begin{align*}
\|&(\lambda-\A(t))^{-1}(\A(t)-\A(s))(\lambda-\A(t))^{-1}u\|_H
\\&\leq \|(\lambda-\A(t))^{-1}\|_{_{\L(V_\gamma',H)}}\|\A(t)-\A(s)\|_{\L(V,V_\gamma')}\|(\lambda-\A(t))^{-1}\|_{\L(V',V)}\|u\|_{V'}
\\&\leq c\omega(|t-s|)\frac{1}{(1+|\lambda|)^{1-\frac{\gamma}{2}}}\|u\|_{V'}
\end{align*}

for each $\lambda\in \Gamma$ and some constant $c>0$ depending only on $M,\alpha, \gamma$ and $c_H.$ Then
	 \begin{align*}
	 \|\A^{-1/2}(t)u-\A^{-1/2}(s)u\|_{H}
	 &\leq c\omega(|t-s|)\|u\|_{V'} \int_0^\infty \int_{0}^\infty \frac{1}{\sqrt\tau} e^{-\tau r\cos{\theta}}\frac{1}{(1+r)^{1-\frac{\gamma}{2}}}\d\tau\d r
	 \\&=c \omega(|t-s|)\|u\|_{V'}\int_0^\infty \int_{0}^\infty \frac{\sqrt r}{\sqrt\tau} e^{-s \cos{\theta}}\frac{\d s}{r}\frac{1}{(1+r)^{1-\frac{\gamma}{2}}}\d r 
	 \\&\leq  c\omega(|s-t|)\|u\|_{V'}\int_0^\infty\frac{1}{\sqrt r}\frac{1}{(1+r)^{1-\frac{\gamma}{2}}}\d r 
	 \\&\leq c\omega(|s-t|)\|u\|_{V'}
	 \end{align*}
	 for some constant $c>0$ depending only on $M,\alpha, \gamma$ and $c_H.$ This proves  inequality  (\ref{eq1 lemma: Holder continuity for square}), the second inequality also follows easily by a similar way. Let now prove the last one. First remark that (\ref{eq 3:Dini-condition})  implies, in particular, that
	 \[\|\A(t)-\A(s)\|_{\L(V,V')}\leq c\omega(|t-s|).\]
On the other hand,  $\A^{-1/2}(\cdot):[0,T]\rightarrow\L(V',H)$ is   continuously (\ref{eq1 lemma: Holder continuity for square}) since $\omega(|t-s|)\to 0$ as $t\to s$ by assumption. Thus 
	 \begin{equation}\label{constant kappa}\kappa:=\sup_{r\in[0,T]}\|\A^{-1/2}(r)\|_{\L(V',H)}< \infty.\end{equation}
Since $A^{1/2}(\cdot)u=\A^{-1/2}(\cdot)\A(\cdot)u$ we deduce from (\ref{eq:continuity-nonaut}) and  (\ref{eq1 lemma: Holder continuity for square}) that  
	 \begin{align*}
	 \|A^{1/2}(t)u&-A^{1/2}(s)u\|_{H}=\|\A^{-1/2}(t)\A(t)u-\A^{-1/2}(s)\A(s)u\|_{H}
	 \\&\leq \|\A^{-1/2}(t)\|_{\L(V',H)}\|(\A(t)u-\A(s)u)\|_{V'}+\|(\A^{-1/2}(t)-\A^{-1/2}(s))\|_{\L(V',H)}\|\A(s)u\|_{V'}
	 \\&\leq c_0\omega(|t-s|)\|u\|_{V}
	 \end{align*}
	 where the constant $c_0>0$ depends only on $M,\alpha, \gamma, c_H$ and $\kappa,$ ans thus the claim is proved.
	 \end{proof}
	 The following lemma is a direct consequence of Lemma \ref{lemma: Holder continuity for square}.
\begin{lemma}
\label{lemma: equivalent norms on V} Assume that  $\fra:[0,T]\times V\times V\rightarrow \C$ satisfies  (\ref{eq:continuity-nonaut})-(\ref{eq 3:Dini-condition})  and  $D(A^{1/2}(t))=V$ for every $t\in [0,T].$ Then 
there exists a constant $\sigma>0$ such that
\[\frac{1}{\sigma}\|u\|_V\leq \|A^{1/2}(t)u\|_H\leq\sigma \|u\|_V \]
for all $u\in V$ and every $t\in [0,T].$
\end{lemma}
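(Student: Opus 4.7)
The plan is to leverage Lemma \ref{lemma: Holder continuity for square} directly, together with the compactness of $[0,T]$, to obtain uniform-in-$t$ bounds for $A^{1/2}(t)$ and its inverse between $V$ and $H$. Under the square root property, $A^{1/2}(t):V\to H$ is a bijection with inverse $A^{-1/2}(t):H\to V$, so the two inequalities in the statement are equivalent to saying that both families of operators are uniformly bounded.

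First, I would bound $\|A^{1/2}(t)\|_{\L(V,H)}$ uniformly in $t$. Fix any $t_0\in[0,T]$. The triangle inequality together with (\ref{eq2 lemma: Holder continuity for square}) gives
\[
\|A^{1/2}(t)\|_{\L(V,H)} \;\leq\; \|A^{1/2}(t_0)\|_{\L(V,H)} + c_0\,\omega(|t-t_0|),
\]
and since $\omega$ is continuous on the compact interval $[0,T]$, it is bounded there, so the right-hand side is uniformly controlled. Set $\sigma_1 := \sup_{t\in[0,T]}\|A^{1/2}(t)\|_{\L(V,H)}<\infty$; this gives the upper estimate $\|A^{1/2}(t)u\|_H\leq \sigma_1\|u\|_V$. (Alternatively, one could write $A^{1/2}(t) = \A^{-1/2}(t)\A(t)$ and combine (\ref{eq:continuity-nonaut}) with the uniform bound (\ref{constant kappa}) obtained in the previous lemma.)

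For the lower estimate, exactly the same reasoning applied to (\ref{eq12 lemma: Holder continuity for square}) yields $\sigma_2 := \sup_{t\in[0,T]}\|A^{-1/2}(t)\|_{\L(H,V)}<\infty$. Since $A^{-1/2}(t)A^{1/2}(t) u = u$ for all $u\in V$, we get
\[
\|u\|_V = \|A^{-1/2}(t)A^{1/2}(t)u\|_V \leq \sigma_2\,\|A^{1/2}(t)u\|_H.
\]
Setting $\sigma := \max(\sigma_1,\sigma_2)$ completes the proof. There is essentially no obstacle here: the only input beyond Lemma \ref{lemma: Holder continuity for square} is continuity of $\omega$ on the compact set $[0,T]$, which is part of the standing hypotheses.
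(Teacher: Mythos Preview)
Your proposal is correct and matches the paper's intent: the paper states that Lemma~\ref{lemma: equivalent norms on V} is a direct consequence of Lemma~\ref{lemma: Holder continuity for square}, and you carry out precisely this deduction by combining the continuity estimates (\ref{eq12 lemma: Holder continuity for square}) and (\ref{eq2 lemma: Holder continuity for square}) with the boundedness of $\omega$ on the compact interval $[0,T]$.
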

\begin{notation} To keep notations simple as possible  we will in the sequel denote all positive constants depending on $M,\alpha, \gamma, c_H,T$ and $\sigma$ that appear in proofs and theorems uniformly as $c>0.$
\end{notation}
\section{Norm continuous evolution family}\label{Sec2 Norm continuity}

  Let $
 \fra: [0,T]\times V\times V \to \C$ be a bounded and coercive non-autonomous sesquilinear form. Then we known from Theorem \ref{wellposedness in V'2}
 that for each given $ s\in[0,T)$ and $x\in H,$ the Cauchy problem  
 \begin{equation}\label{evolution equation u(s)=x}
 \dot{u}(t)+\A(t)u(t)=0\ \ \hbox{a.e. on}\ [s,T],\ \   u(s)=x,
 \end{equation}
 has a unique solution $u\in\MR(V,V'):=\MR (s,T;V,V'):=L^2(s,T;V)\cap H^1(s,T;V').$ Recall that  the maximal regularity space $\MR(V,V')$ is continuously embedded in $C([s,T],H).$
Therefore, for every $(t,s)\in\overline{\Delta}$ and every $x\in H$ we can define  
 
 \begin{equation}\label{evolution family}
 \mathcal U(t,s)x:=u(t),
\end{equation}
  where $u$ is the unique solution in $MR(V,V')$ of (\ref{evolution equation u(s)=x}) and $\Delta:=\{ (t,s)\in[0,T]^2:\  t< s\}.$ 
 \begin{proposition} 
The family  $\{\mathcal U(t,s):\ (t,s)\in\overline{\Delta}\}$ yields a contractive, strongly continuous evolution family on $H.$ Moreover, 
  for each $f\in L^2(0,T,H)$ 
 \[v(t)=\int_0^t  \mathcal U(t,r)f(r)dr\]
 is the unique solution in $MR(V,V')$ of the inhomogeneous problem 
 \begin{equation}\label{inhomegenuous problem}
 \dot{v}(t)+\A(t)v(t)=f(t)\ \ \hbox{a.e. on}\ [0,T],\ \   u(0)=0,
 \end{equation} 
\end{proposition}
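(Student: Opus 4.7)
The plan is to verify the axioms of an evolution family in order—identity, composition, contractivity, and strong continuity—and then the variation-of-constants identity. Throughout, for given $(s,x)$ let $u\in\MR(s,T;V,V')$ denote the unique Lions solution of (\ref{evolution equation u(s)=x}), so that $\U(t,s)x=u(t)$ by definition. The identity $\U(s,s)=I$ is built into the definition, and the composition property $\U(t,s)=\U(t,r)\U(r,s)$ for $0\le s\le r\le t\le T$ follows from uniqueness in Theorem \ref{wellposedness in V'2}: the restriction $u|_{[r,T]}$ lies in $\MR(r,T;V,V')$ and solves the Cauchy problem from $r$ with initial value $u(r)=\U(r,s)x$. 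Contractivity on $H$ comes from the standard energy identity
\[\tfrac{1}{2}\tfrac{\d}{\d t}\|u(t)\|^2=-\Re\fra(t,u(t),u(t))\le -\alpha\|u(t)\|_V^2\le 0,\]
which also yields the a priori bound $\|u\|_{L^2(s,T;V)}\le(2\alpha)^{-1/2}\|x\|$. Continuity of $t\mapsto\U(t,s)x$ on $[s,T]$ is immediate from the embedding $\MR(s,T;V,V')\hookrightarrow C([s,T],H)$.

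For continuity in $s$, the right-hand limit is straightforward: if $s_n\downarrow s_0$ and $t\ge s_n$, composition and contractivity give
\[\|\U(t,s_n)x-\U(t,s_0)x\|=\|\U(t,s_n)\bigl(x-\U(s_n,s_0)x\bigr)\|\le\|x-\U(s_n,s_0)x\|,\]
and $\U(s_n,s_0)x\to x$ in $H$ by continuity of $\U(\cdot,s_0)x$. The delicate part is the left-hand limit $s_n\uparrow s_0$; the analogous identity $\U(t,s_n)=\U(t,s_0)\U(s_0,s_n)$ reduces matters to showing $\U(s_0,s_n)x\to x$ in $H$, where the solutions $u_n\in\MR(s_n,T;V,V')$ with $u_n(s_n)=x$ live on shrinking intervals, so no single trajectory supplies the continuity. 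I expect this to be the main obstacle, and I would handle it by upgrading weak to strong convergence. The weak form of the equation yields, for every $y\in V$,
\[(u_n(s_0)-x\mid y)=-\int_{s_n}^{s_0}\fra(\tau,u_n(\tau),y)\,\d\tau,\]
hence $|(u_n(s_0)-x\mid y)|\le M(s_0-s_n)^{1/2}\|y\|_V\|u_n\|_{L^2(s_n,T;V)}\le c(s_0-s_n)^{1/2}\|y\|_V\|x\|$, so $u_n(s_0)\to x$ in $V'$. Contractivity bounds $\{u_n(s_0)\}$ in $H$, so extracting a weakly convergent subsequence identifies the weak $H$-limit with $x$; weak lower semicontinuity combined with $\|u_n(s_0)\|\le\|x\|$ forces $\|u_n(s_0)\|\to\|x\|$, and in a Hilbert space weak plus norm convergence imply strong convergence.

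Joint strong continuity of $(t,s)\mapsto\U(t,s)x$ then follows from the splitting $\U(t,s)x-\U(t_0,s_0)x=[\U(t,s)x-\U(t,s_0)x]+[\U(t,s_0)x-\U(t_0,s_0)x]$: the second bracket tends to zero by continuity in $t$, and the first is uniformly controlled in $t$ by the $s$-continuity estimates above (bounded by $\|x-\U(\max(s,s_0),\min(s,s_0))x\|$). For the variation-of-constants identity, Theorem \ref{wellposedness in V'2} applied with $f\in L^2(0,T;H)\hookrightarrow L^2(0,T;V')$ and $u_0=0$ yields a unique $\tilde v\in\MR(V,V')$ solving (\ref{inhomegenuous problem}), and $f\mapsto\tilde v$ is continuous from $L^2(0,T;V')$ into $C([0,T],H)$. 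The assignment $f\mapsto v(t):=\int_0^t\U(t,r)f(r)\,\d r$ is continuous from $L^2(0,T;H)$ into $C([0,T],H)$ by contractivity of $\U$, joint strong continuity, and Cauchy–Schwarz. It therefore suffices to verify $v=\tilde v$ on a dense subspace, for which I would take step functions $f=x\mathbf{1}_{[a,b)}$ with $x\in V$: on each of $[0,a]$, $[a,b]$, $[b,T]$ the two sides solve the same homogeneous or inhomogeneous Cauchy problem piece by piece, and uniqueness in Theorem \ref{wellposedness in V'2} forces equality; density of such step functions in $L^2(0,T;H)$ then extends the identity to all admissible $f$.
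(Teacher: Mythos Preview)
Your contractivity argument via the energy identity is exactly the one given in the paper. For the remaining assertions---that $\{\U(t,s)\}$ is a strongly continuous evolution family and that the variation-of-constants formula holds---the paper simply invokes \cite[Propositions~2.3 and~2.4]{ACFP}, whereas you supply a direct proof. Your treatment of the evolution-family axioms and of strong continuity in $t$ is standard and correct; the left-continuity argument in $s$ (convergence in $V'$ plus the contractivity bound $\|u_n(s_0)\|\le\|x\|$ upgraded to strong $H$-convergence via weak limits and norm convergence) is a clean way to handle the one genuinely non-obvious point. The only place that is a bit thin is the step-function verification of the Duhamel formula: saying that on $[a,b]$ ``the two sides solve the same Cauchy problem'' presupposes that $t\mapsto\int_a^t\U(t,r)x\,\d r$ lies in $\MR(a,b;V,V')$ and satisfies $\dot v+\A v=x$, which is precisely the content to be checked. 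This can be filled in by a Fubini/integration-by-parts computation against test functions $\phi\in C_c^\infty(a,b)$ and $y\in V$ (using $\partial_t\U(t,r)x=-\A(t)\U(t,r)x$ in $V'$ and the $L^2(a,t;V)$-integrability of $r\mapsto\U(t,r)x$ that follows from the energy estimate), so the gap is technical rather than conceptual. In summary: your route is more self-contained and illuminates why the cited results hold, at the cost of working through details the paper is content to outsource.
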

\begin{proof}
The last assertion and the fact that $\{\mathcal U(t,s):\ (t,s)\in\overline{\Delta}\}$ yields a bounded, strongly continuous evolution family on $H$ follows immediately  from \cite[Proposition 2.3, Proposition 2.4]{ACFP}. It remains to show that 

 \[\|\mathcal U(t,s) x\|\leq \|x\|, \quad\text{ for all  $x\in H$  and $(t,s)\in\overline{\Delta}.$}\]
Let  $x\in H$ and $s\in[0,T).$ Since  $\U(\cdot,s)x\in \MR (V,V')$ then
the function  $\|\U(\cdot,s)\|^2$ is absolutely continuous on $[s,T]$ and
\begin{equation}\label{eq:chain_rule V'}
\frac{d}{dt}\| \U(t,s)\|^2=2\Re\dual{\dot\U(t,s)}{\U(t,s)},
\end{equation}
see e.g., \cite[Chapter III, Proposition 1.2]{Sho97} or \cite[Lemma 5.5.1]{Tan79}. It follows
 	\begin{align*}
 	\frac{d}{dt}\|\mathcal U(t,s)x\|^2&=2\Re \dual{-\A(t)\mathcal U(t,s)x}{\mathcal U(t,s)x}
 	\\&=-2\Re \fra(t,\mathcal U(t,s)x,\mathcal U(t,s)x)
 	\leq -2\alpha \|\mathcal U(t,s)x\|^2_V\leq 0
 	\end{align*}
 	for almost every $t\in [s,T].$ Integrating this inequality on $]s,t[$ and using (\ref{eq:Ellipticity-nonaut})  we obtain 
 	 
 	\[\|\mathcal U(t,s)x\|^2-\|\mathcal U(s,s)x\|^2\leq -2\alpha\int_s^t   \|\mathcal U(r,s)x\|^2_V\d r\leq 0,\]
 and the claim follows.
 \end{proof}
 
 %

\paragraph\noindent  For the remainder of this section, we assume that  $\fra:[0,T]\times V\times V\rightarrow \C$ satisfies  (\ref{eq:continuity-nonaut})-(\ref{eq 3:Dini-condition})  and $D(A(0)^{1/2})=V$ and we set
\begin{equation}\label{restriction of EVF on V}
U(t,s)x:=\mathcal U(\cdot,\cdot)x\quad \text{ for } x\in V.
\end{equation}
Then it is well known that $U(\cdot,s)x\in MR_{\fra,s}\subset  C(s,T,V)$ 
where 

\[MR_{\fra,s}:=\{u\in H^1(s,T,H)\cap L^2(s,T,V):\ u(s)\in V \text{ and }  \A(\cdot)u(\cdot)\in L^2(s,T,H)\},\]
see \cite[Theorems 4.1, 4.2]{Ar-Mo15}.  The  main result of this section states that $\Big\{ U(t,s): (t,s)\in\overline{\Delta}\Big\}\subset\L(V)$ define  a \textit{norm continuous} evolution family on $V,$ that is  $U(\cdot,\cdot)\in C(\Delta, \L(V)).$ To this end, we recall that the solution $u=U(\cdot,s)$ of (\ref{evolution equation u(s)=x}) satisfies the following key formula
\begin{equation}\label{Formula for the solution}
U(t,s)x=e^{-(t-s)A(t)}x+\int_{s}^te^{-(t-r)A(t)}(\A(t)-\A(r))U(r,s)x{\rm  d}r
\end{equation}
for all $(s,t)\in\Delta$ and $x\in V.$ This formula is due to Acquistapace and Terreni \cite{Ac-Ter87} and was proved in a more general setting in \cite[Proposition 3.5]{Ar-Mo15}. First we introduce the following notations:

\begin{equation*}\label{Formula for the solution 1}
U_{1}(t,s)x:=e^{-(t-s)A(t)}x, \quad 
\end{equation*}
and 
\begin{equation*}\label{Formula for the solution 2}
U_{2}(t,s)x:=\int_{s}^te^{-(t-r)\A(t)}(\A(t)-\A(r))U(r,s)x{\rm  d}r.
\end{equation*}
Moreover, consider the operators $P_s: C(s,T,V)\ra  C(s,T,V), s\in[0,T),$ defined for each $h\in C(s,T,V)$ and $\tau\in[s,T]$ by
\[(P_sh)(\tau):=\int_{s}^\tau e^{-(\tau-r)\A(\tau)}(\A(\tau)-\A(r))h(r){\rm  d}r.\]
Let $c_0>0$ be arbitrary and fixed. Then replacing $\A(t)$ with $\A(t)+\mu$ for $\mu\in\R$ we may assume without loss of generality that 
\begin{equation}\label{constant c0}\|A(t)^{-\frac{1}{2}}\|_{\L(V)}\leq c_0,\quad \text{  for all } t\in[0,T].
\end{equation}
 With this notations we have the following first result. 
 
 \begin{lemma}\label{main theorem: step2}
 The operator $P_s$  is well defined and $\|P_s\|_{\L(C(s,T,V))}<\frac{1}{4}$ by choosing $c_0>0$ in (\ref{constant c0}) to be small enough.
 \end{lemma}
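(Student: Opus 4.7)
The plan is to combine the semigroup estimate Proposition~\ref{lemma: estimations for general from}(\ref{Eq3: estimation  semigroup})---i.e., $\|e^{-u\A(\tau)}\|_{\L(V_\gamma',V)}\le c\,u^{-(1+\gamma)/2}$, uniform in $\tau\in[0,T]$ by Remark~\ref{Remark: estimations for general from}---with the H\"older-type assumption (\ref{eq 3:Dini-condition}), rewritten in operator form as $\|(\A(\tau)-\A(r))h(r)\|_{V_\gamma'}\le \omega(\tau-r)\|h(r)\|_V$. Substituted into the definition of $P_s$, these two bounds yield the pointwise estimate
\[\|(P_sh)(\tau)\|_V\le c\,\|h\|_{C([s,T],V)}\int_0^{\tau-s}\frac{\omega(u)}{u^{(1+\gamma)/2}}\,\d u.\]
To see this integral is finite, I would use $u^{-(1+\gamma)/2}=u^{1/2}\cdot u^{-1-\gamma/2}\le T^{1/2}u^{-1-\gamma/2}$ on $[0,T]$, so (\ref{eq 2:Dini-condition}) gives convergence. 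This already shows $P_s\in\L(C([s,T],V))$.

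To push the operator norm below $1/4$, I would exploit the shift invariance: replacing $\A(t)$ by $\A(t)+\mu$ does not affect the difference $\A(\tau)-\A(r)$, but it multiplies the semigroup factor by $e^{-\mu(\tau-r)}$. The bound thus improves to
\[\|P_s\|_{\L(C([s,T],V))}\le c\int_0^T\frac{e^{-\mu u}\omega(u)}{u^{(1+\gamma)/2}}\,\d u,\]
which tends to $0$ as $\mu\to\infty$ by dominated convergence. Since $\|(A(t)+\mu)^{-1/2}\|_{\L(V)}$ shrinks as $\mu$ grows, taking $\mu$ sufficiently large is exactly the statement that $c_0$ in (\ref{constant c0}) can be chosen small enough to force $\|P_s\|<1/4$.

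The genuine technical point will be verifying that $P_sh$ actually belongs to $C([s,T],V)$, in view of the weak singularity of the kernel at $r=\tau$. For $s\le\tau<\tau'\le T$ I would split $(P_sh)(\tau')-(P_sh)(\tau)$ into a boundary contribution $\int_\tau^{\tau'} e^{-(\tau'-r)\A(\tau')}(\A(\tau')-\A(r))h(r)\,\d r$, bounded in $V$ by $c\|h\|_{C([s,T],V)}\int_0^{\tau'-\tau}\omega(u)u^{-(1+\gamma)/2}\,\d u\to 0$, and a bulk term over $[s,\tau]$ whose integrand converges pointwise to $0$ as $\tau'\to\tau$ (by continuity of $\tau\mapsto \A(\tau)$ in the appropriate operator topology and strong continuity of the holomorphic semigroup) and is dominated by a fixed $L^1$ envelope of the form $2c\,\omega(\tau_0-r)(\tau_0-r)^{-(1+\gamma)/2}\|h\|_{C([s,T],V)}$ on any small neighbourhood of $\tau_0\in[s,T]$. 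Dominated convergence then closes the argument; this uniform domination near the diagonal is the step requiring the most care.
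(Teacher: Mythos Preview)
Your argument is correct; the route to smallness differs from the paper's. The paper inserts $A(\tau)^{-1/2}A(\tau)^{1/2}$ into the integrand, bounding
\[
\|(P_sh)(\tau)\|_V\le \|A(\tau)^{-1/2}\|_{\L(V)}\int_s^\tau \big\|A(\tau)^{1/2}e^{-(\tau-r)\A(\tau)}(\A(\tau)-\A(r))h(r)\big\|_V\,\d r,
\]
so that the constant $c_0$ from (\ref{constant c0}) appears explicitly as a prefactor of the fixed integral $\int_0^T\omega(u)\,u^{-1-\gamma/2}\,\d u$ (finite by (\ref{eq 2:Dini-condition})); smallness is then immediate from shrinking $c_0$. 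You instead keep the milder kernel $\omega(u)\,u^{-(1+\gamma)/2}$ and obtain smallness by exhibiting the factor $e^{-\mu u}$ that the shift $\A\mapsto\A+\mu$ contributes and letting $\mu\to\infty$ via dominated convergence. Your route avoids the square-root insertion entirely at this step and makes the role of the shift parameter transparent; the paper's route ties the estimate literally to the constant $c_0$ named in the statement. The two are equivalent at bottom, since ``$c_0$ small'' in (\ref{constant c0}) is exactly ``$\mu$ large''. Your continuity sketch (boundary piece plus bulk piece controlled by a dominated-convergence envelope) is also sound; the paper simply defers that point to \cite[Lemma~4.5]{Ar-Mo15}.
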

 This Lemma has been proved in \cite[Step.3 of the proof of Theorem 4.4]{Ar-Mo15}. For completeness we will carry out the proof.  
 \begin{proof} (of Lemma \ref{main theorem: step2})	For each $s\in[0,T],$ $P_s$ is well defined  thanks to \cite[Lemma 4.5]{Ar-Mo15} and the Lebesgue domination theorem. Moreover,  since  (\ref{eq 3:Dini-condition}), Proposition \ref{lemma: estimations for general from}-(\ref{Eq3: estimation  semigroup}) imply \begin{align}
 	\|\nonumber&\displaystyle A^{1/2}(t_2)e^{-(t_2-t_1)\A(t_2)}(\A(t_2)-\A(t_1))h(t_1)x\|_V
 	\\\nonumber\displaystyle =&\|A^{1/2}(t_2)\displaystyle e^{-\frac{(t_2-t_1)}{2}\A(t_2)}e^{-\frac{(t_2-t_1)}{2}\A(\tau)}(\A(t_2)-\A(t_1))h(r)x\|_V
 	\\&\label{proof main thm: eq key}\leq c\frac{\omega(t_2-t_1)}{(t_2-t_1)^{1+\frac{\gamma}{2}}})\|x\|_V
 	\end{align} for all $t_1,t_2\in[0,T]$ with $t_1<t_2,$ it follows
 	
 	\[\|P_s h\|_\infty\leq c \|A^{-1/2}(\tau)\|_{\L(V)}\Big(\int_0^T \frac{\omega(\sigma)}{\sigma^{1+\frac{\gamma}{2}}} \d\sigma\Big)\|h\|_\infty\leq cc_0\|h\|_\infty\]
 	The last inequality above holds thanks to (\ref{eq 2:Dini-condition}).  Choosing then $c_0>0$ small enough we obtain that  $\|P_s\|_{\L(C(s,T,V))}<\frac{1}{4}.$
 \end{proof}

 \begin{lemma}\label{main theorem: step1}  Assume $\fra:[0,T]\times V\times V\rightarrow \C$ satisfies  (\ref{eq:continuity-nonaut})-(\ref{eq 3:Dini-condition})  and $D(A(0)^{1/2})=V.$ Then 
 	\begin{equation}\label{Step 1main proof: Eq1}
 	\|U_1(t,s)-U_1(t',s)\|_{\L(V)} \leq c\Big[\log\Big(\frac{t'-s}{t-s}\Big)+\omega(|t-t'|)\Big]
 	\end{equation}
 	holds for each $s\in[0,T]$ and all $0\leq s<t\leq t'\leq T.$ 
 \end{lemma}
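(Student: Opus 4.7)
}
The natural strategy is to isolate the two sources of variation by interpolating through the ``mixed'' operator $e^{-(t'-s)A(t)}$. Write
\begin{equation*}
U_1(t,s)-U_1(t',s)=\bigl[e^{-(t-s)A(t)}-e^{-(t'-s)A(t)}\bigr]+\bigl[e^{-(t'-s)A(t)}-e^{-(t'-s)A(t')}\bigr],
\end{equation*}
so that the first bracket depends only on the time variable while the second only on the generator. I expect the $\log$ term on the right-hand side to come from the first bracket and the $\omega(|t-t'|)$ term to come from the second.

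For the first bracket the plan is to use the semigroup identity
\begin{equation*}
e^{-(t-s)A(t)}-e^{-(t'-s)A(t)}=\int_{t-s}^{t'-s}A(t)e^{-\tau A(t)}\,d\tau
\end{equation*}
and the $\L(V)$-estimate $\|A(t)e^{-\tau A(t)}\|_{\L(V)}\leq c/\tau$. To prove the latter, invoke Lemma \ref{lemma: equivalent norms on V} to pass to the norm $\|A^{1/2}(t)\cdot\|_H$ on $V$, use that $A^{1/2}(t)$ commutes with $A(t)$ and with the semigroup it generates, and conclude by Proposition \ref{lemma: estimations for general from}-(\ref{analytic estimation}). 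Integrating $c/\tau$ from $t-s$ to $t'-s$ produces $c\log\bigl(\tfrac{t'-s}{t-s}\bigr)$.

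For the second bracket the plan is to use the Dunford representation (\ref{analytic representation}) together with the resolvent identity
\begin{equation*}
(\lambda-A(t))^{-1}-(\lambda-A(t'))^{-1}=(\lambda-\A(t))^{-1}\bigl(\A(t)-\A(t')\bigr)(\lambda-\A(t'))^{-1},
\end{equation*}
reading $\A(t)-\A(t')$ as an operator $V\to V_\gamma'$. Combining Proposition \ref{lemma: estimations for general from}-(\ref{Eq6: estimation resolvent}), assumption (\ref{eq 3:Dini-condition}), and Proposition \ref{lemma: estimations for general from}-(\ref{Eq2: estimation resolvent}) gives, uniformly in $\lambda\in\Gamma$,
\begin{equation*}
\bigl\|(\lambda-\A(t))^{-1}(\A(t)-\A(t'))(\lambda-\A(t'))^{-1}\bigr\|_{\L(V)}\leq\frac{c\,\omega(|t-t'|)}{(1+|\lambda|)^{(3-\gamma)/2}}.
\end{equation*}
Since $(3-\gamma)/2>1$, the resulting contour integral $\int_{\Gamma}\frac{|e^{-(t'-s)\lambda}|}{(1+|\lambda|)^{(3-\gamma)/2}}|d\lambda|$ is bounded by a constant depending only on $\gamma$ and $\varphi$ (the exponential factor is unnecessary for convergence, so the bound is uniform in $t'-s\geq 0$), and the second bracket is controlled by $c\,\omega(|t-t'|)$.

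The main delicate point is bookkeeping the spaces so that each factor in the resolvent product is estimated in the sharpest available norm; once that is arranged, adding the two bounds yields (\ref{Step 1main proof: Eq1}).
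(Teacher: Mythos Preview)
Your proposal is correct and follows essentially the same approach as the paper: the same splitting through the mixed operator $e^{-(t'-s)A(t)}$, the same semigroup identity combined with Lemma \ref{lemma: equivalent norms on V} and Proposition \ref{lemma: estimations for general from}-(\ref{analytic estimation}) for the first bracket, and the same Dunford--resolvent argument with Proposition \ref{lemma: estimations for general from}-(\ref{Eq2: estimation resolvent}),(\ref{Eq6: estimation resolvent}) for the second. Your remark that the contour integral converges uniformly because $(3-\gamma)/2>1$, without needing the exponential factor, is exactly the observation the paper uses implicitly when bounding $e^{-(t'-s)r\cos\varphi}\le 1$.
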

 \begin{proof} Let $x\in V$ and $0\leq s<t\leq t'\leq T.$ Because of assumptions (\ref{eq:continuity-nonaut})-(\ref{eq 3:Dini-condition}) we have $D(A(t)^{1/2})=V$ for all $t\in [0,T]$ \cite[Proposition 2.5]{Ar-Mo15}. Then we deduce from (\ref{eq 3:Dini-condition}), Lemma \ref{lemma: equivalent norms on V} and Proposition \ref{lemma: estimations for general from}.(\ref{analytic estimation}) that
 	\begin{align*}
 	\|e^{-(t-s)A(t)}x-e^{-(t'-s)A(t)}x\|_V&\leq\int_{t-s}^{t'-s}\|A(t)e^{-rA(t)}x{\d}r\|_V
 	\\&\leq \sigma^{-1}\int_{t-s}^{t'-s}\|A^{1/2}(t)A(t)e^{-rA(t)}x\|_H{\d}r
 	\\&\leq \sigma^{-1}\int_{t-s}^{t'-s}\|A(t)e^{-rA(t)}A^{1/2}(t)x\|_H{\d}r
 	\\&\leq c\sigma^{-1}\int_{t-s}^{t'-s}\frac{1}{r}\|A^{1/2}(t)x\|_H{\d}r
 	\\&\leq c\sigma^{-1}\sigma \int_{t-s}^{t'-s}\frac{1}{r}{\d}r\|x\|_V
 	\\&=c\log\big(\frac{t'-s}{t-s}\big)\|x\|_V.
 	\end{align*}
 	On the other hand, since 
 	\[e^{-(t'-s)A(t)}x-e^{-(t'-s)A(t')}x=\frac{1}{2\pi i}\int_{\Gamma} e^{-(t'-s)\lambda}(\lambda-\A(t))^{-1}(\A(t)-\A(t'))(\lambda-\A(t'))^{-1}x\d\lambda.\]
 	by (\ref{analytic representation}), we deduce from  assumption (\ref{eq 3:Dini-condition}) and Proposition \ref{lemma: estimations for general from}.(\ref{Eq2: estimation resolvent}),(\ref{Eq6: estimation resolvent})  that 
 	\begin{align*}
 	\|e^{-(t'-s)A(t)}x-e^{-(t'-s)A(t')}x\|_V
 	\leq &c\int_{\Gamma} e^{-(t'-s)\Re\lambda}\frac{\omega(|t-t'|)}{(1+\lambda)^{\frac{3-\gamma}{2}}}\d\lambda\|x\|_V
 	\\&=c\omega(|t-t'|)\int_{0}^\infty \frac{e^{-(t'-s)r\cos{\varphi}}}{(1+\lambda)^{\frac{3-\gamma}{2}}}{\d }r\|x\|_V
 	\\&\leq{c}\omega(|t-t'|)\int_{0}^\infty \frac{1}{(1+r)^{\frac{3-\gamma}{2}}}{\d } r\|x\|_V
 	\end{align*}
 	Now (\ref{Step 1main proof: Eq1}) follows by the triangle inequality.
 \end{proof}


 Now we arrive to our crucial technical Lemma. 
 \begin{lemma}\label{main theorem: step3}
 For all $0\leq s<t\leq t'\leq T,$  $h\in C(s,T,V)$ and $0<\delta_0<t-s$ we have   \begin{equation}\label{main thm : main estimates} \|(P_sh)(t')-(P_sh)(t)\|_V\leq c \Big[\kappa_s(t'-t)+\int_0^{\delta_0}\frac{\omega(\sigma)}{\sigma^{1+\frac{\gamma}{2}}}\d\sigma \Big]\|h\|_\infty,\end{equation}
where $ \kappa_s: [0,T]\to \R_+$ with $\kappa_s(r)\to 0$ as $r\to 0.$ 
\end{lemma}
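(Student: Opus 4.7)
The plan is to split $(P_sh)(t')-(P_sh)(t)$ into a \emph{near-singularity} piece (accounting for the $\int_0^{\delta_0}\omega(\sigma)/\sigma^{1+\gamma/2}\,\d\sigma$ term) and a \emph{far-from-singularity} piece (accounting for the $\kappa_s(t'-t)$ term). Concretely, I would write
\[
(P_sh)(t')-(P_sh)(t)=N_{t'}+N_t+F,
\]
where
\[N_{t'}:=\int_{t-\delta_0}^{t'}e^{-(t'-r)\A(t')}(\A(t')-\A(r))h(r)\,\d r,\quad N_t:=-\int_{t-\delta_0}^{t}e^{-(t-r)\A(t)}(\A(t)-\A(r))h(r)\,\d r,\]
and $F:=\int_s^{t-\delta_0}\bigl[e^{-(t'-r)\A(t')}(\A(t')-\A(r))-e^{-(t-r)\A(t)}(\A(t)-\A(r))\bigr]h(r)\,\d r$.

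For the near pieces $N_t$ and $N_{t'}$, I apply the key pointwise bound (\ref{proof main thm: eq key}) already established in the proof of Lemma \ref{main theorem: step2}, together with Lemma \ref{lemma: equivalent norms on V} and (\ref{constant c0}). After the substitutions $\sigma=t-r$ and $\sigma=t'-r$, this yields
\[
\|N_t\|_V\le c\int_0^{\delta_0}\frac{\omega(\sigma)}{\sigma^{1+\gamma/2}}\d\sigma\,\|h\|_\infty,\qquad
\|N_{t'}\|_V\le c\int_0^{\delta_0+(t'-t)}\frac{\omega(\sigma)}{\sigma^{1+\gamma/2}}\d\sigma\,\|h\|_\infty,
\]
and the excess $\int_{\delta_0}^{\delta_0+(t'-t)}\omega(\sigma)/\sigma^{1+\gamma/2}\,\d\sigma$ is absorbed into $\kappa_s(t'-t)$ since $\omega(\sigma)/\sigma^{1+\gamma/2}$ is bounded on $[\delta_0,T]$ by continuity of $\omega$.

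For the far piece $F$, I rewrite the integrand as
\[
\bigl[e^{-(t'-r)\A(t')}-e^{-(t-r)\A(t)}\bigr](\A(t')-\A(r))h(r)+e^{-(t-r)\A(t)}(\A(t')-\A(t))h(r).
\]
The second summand is estimated using $\|\A(t')-\A(t)\|_{\L(V,V_\gamma')}\le \omega(t'-t)$ (from (\ref{eq 3:Dini-condition})) and $\|e^{-(t-r)\A(t)}\|_{\L(V_\gamma',V)}\le c(t-r)^{-(1+\gamma)/2}$ from Proposition \ref{lemma: estimations for general from}.(\ref{Eq3: estimation  semigroup}); integrating over $r\in[s,t-\delta_0]$ gives a contribution of order $\omega(t'-t)(t-s)^{(1-\gamma)/2}\|h\|_\infty\to 0$ as $t'-t\to0$. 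For the first summand, further split the exponential difference as
\[
\bigl[e^{-(t'-r)\A(t')}-e^{-(t-r)\A(t')}\bigr]+\bigl[e^{-(t-r)\A(t')}-e^{-(t-r)\A(t)}\bigr],
\]
treating the first bracket via $-\int_{t-r}^{t'-r}\A(t')e^{-\tau\A(t')}\d\tau$ and the second via the resolvent identity exactly as in the proof of Lemma \ref{main theorem: step1}; since $t-r\ge\delta_0$ throughout the range of $r$, the singular factors are bounded, and the contribution is a finite $r$-integral whose size goes to $0$ with $t'-t$ by the factors $\log\bigl((t'-r)/(t-r)\bigr)$ and $\omega(t'-t)$, respectively. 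Both contributions are packaged into $\kappa_s(t'-t)$.

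The main obstacle is the careful bookkeeping in the far piece: one must verify that the two integrable singularities (the $\omega(t-r)$ coming from $\A(t')-\A(r)$ and the semigroup-regularising factor) combine in a way that produces an $r$-integral over $[s,t-\delta_0]$ which is both finite and tends to $0$ uniformly in $h$ as $t'-t\to 0$; this is where Remark \ref{key rematk} and (\ref{Key rmk1:Dini-condition}) become essential, together with the fact that the $\delta_0$-truncation removes the non-integrable endpoint. Collecting the three contributions yields (\ref{main thm : main estimates}).
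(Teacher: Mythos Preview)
Your proposal is correct and follows essentially the same strategy as the paper: a near/far split at distance $\delta_0$ from the singular endpoint, with the far piece further decomposed into a time-shift in the semigroup exponent, a generator-shift in the semigroup, and the residual $e^{-(t-r)\A(t)}(\A(t')-\A(t))h(r)$ term. The only minor differences are organizational---you cut both integrals at the common point $t-\delta_0$ (so the paper's separate tail term on $[t-\delta_0,t'-\delta_0]$ is absorbed into your $N_{t'}$), and for the bracket $e^{-(t-r)\A(t')}-e^{-(t-r)\A(t)}$ you invoke the contour/resolvent identity of Lemma~\ref{main theorem: step1} while the paper uses Duhamel's formula $\int_0^{t-r}e^{-((t-r)-\xi)\A(t)}(\A(t)-\A(t'))e^{-\xi\A(t')}\,\d\xi$; both routes produce an $\omega(t'-t)$ prefactor and either works.
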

\begin{proof}
We write $(P_sh)(t')-(P_sh)(t)$  as follows 
\begin{align}
\nonumber &(P_sh)(t')-(P_sh)(t)\\\nonumber&=\int_{0}^{t'-s}e^{-(t'-s-r)\A(t')}(\A(t')-\A(r+s))h(r+s){\rm  d}r-\int_{0}^{t-s}e^{-(t-s-r)\A(t)}(\A(t)-\A(r+s))h(r+s){\rm  d}r
\\&=\label{crucial step: eq1}\int_{0}^{t'-s-\delta_0}e^{-(t'-s-r)\A(t')}(\A(t')-\A(r+s))h(r+s){\rm  d}r-\int_{0}^{t-s-\delta_0}e^{-(t-r)\A(t)}(\A(t)-\A(r+s))h(r+s){\rm  d}r
\\&\label{crucial step: eq2}\quad +\int_{t'-s-\delta_0}^{t'-s}e^{-(t'-s-r)\A(t')}(\A(t')-\A(r+s))h(r+s){\rm  d}r-\int_{t-s-\delta_0}^{t-s}e^{-(t-s-r)\A(t)}(\A(t)-\A(r+s))h(r+s){\rm  d}r.
\end{align}
Then  a similar argument as in the proof of Lemma \ref{main theorem: step2} one obtain 
\begin{align}
\|\int_{t'-s-\delta_0}^{t'-s}e^{-(t'-s-r)\A(t')}(\A(t')-\A(r+s))h(r+s){\rm  d}r\|_V
&\label{crucial step: eq3}\leq c\int_{t'-s-\delta_0}^{t'-s} \frac{\omega(t'-s-r)}{(t'-s-r)^{1+\frac{\gamma}{2}}}\d r\\&=\nonumber c\|h\|_\infty\int_{0}^{\delta_0} \frac{\omega(r)}{r^{1+\frac{\gamma}{2}}}\d r,
\end{align}
and
\begin{align}\label{crucial step: eq4}\|\int_{t-s-\delta_0}^{t-s}e^{-(t-s-r)\A(t)}(\A(t)-\A(r+s))h(r+s){\rm  d}r\|_V\leq  c\|h\|_\infty\int_{0}^{\delta_0} \frac{\omega(r)}{r^{1+\frac{\gamma}{2}}}\d r. 
\end{align}
It remains to treat integral terms in (\ref{crucial step: eq1})
	\begin{align} \nonumber I_{3,\delta_0}&:=\int_{0}^{t'-s-\delta_0} e^{-(t'-r-s)\A(t')}(\A(t')-\A(r+s))h(r+s){\rm  d}r
	\\&\nonumber\qquad\qquad\qquad-\int_{0}^{t-s-\delta_0} e^{-(t-r-s)\A(t)}(\A(t)-\A(r+s))h(r+s){\rm  d}r
	\\&=\label{proof main thm: estimation 3 integral-problem}\int_{0}^{t-s-\delta_0} \Big[e^{-(t'-r-s)\A(t')}-e^{-(t-r-s)\A(t')}\Big](\A(t')-\A(r+s))
	h(r+s){\rm  d}r
		\\&\qquad + \label{proof main thm: estimation 4 integral-problem}\int_{0}^{t-s-\delta_0} \Big[e^{-(t-r-s)\A(t')}-e^{-(t-r-s)\A(t)}\Big](\A(t')-\A(r+s))
		h(r+s){\rm  d}r
		\\&\label{proof main thm: estimation 5 integral-problem}\qquad\qquad+\int_{0}^{t-s-\delta_0} e^{-(t-r-s)\A(t)}(\A(t')-\A(t))h(r+s){\rm  d}r
		\\&\label{proof main thm: estimation 6 integral-problem}\qquad\qquad\qquad\qquad+\int_{t-s-\delta_0}^{t'-s-\delta_0} e^{-(t'-r-s)\A(t')}(\A(t')-\A(r+s))h(r+s){\rm  d}r.
	\end{align}
We denote the integral terms in  (\ref{proof main thm: estimation 3 integral-problem}), (\ref{proof main thm: estimation 4 integral-problem}), (\ref{proof main thm: estimation 5 integral-problem}) and  (\ref{proof main thm: estimation 6 integral-problem}) by $I_{3,\delta_0,1}, I_{3,\delta_0,2}, I_{3,\delta_0,3}$ and $I_{3,\delta_0,4}$ respectively.
\par\noindent Then, Lemma \ref{lemma: equivalent norms on V} and the  Proposition \ref{lemma: estimations for general from}.(\ref{analytic estimation gamma=1}) imply that 

\begin{align*}
\|\A(t')e^{-r\A(t')}z\|_V&\leq r\|A^{\frac{1}{2}}(t')\A(t')e^{-{r}\A(t')}z\|_H
= \sigma\|\A(t')e^{-\frac{r}{2}\A(t')}A^{\frac{1}{2}}(t')e^{-\frac{r}{2}\A(t')}z\|_H
\\&\leq c\frac{1}{r}\|A^{\frac{1}{2}}(t')e^{-\frac{r}{2}\A(t')}z\|_H
\\&\leq c\frac{1}{r}\|e^{-\frac{r}{2}\A(t')}z\|_V\leq \frac{c}{r^2}\|z\|_{V'}
\end{align*}
for each $z\in V'$ and $r>0.$ Therefore,
\begin{align}
\nonumber\|I_{3,\delta_0,1}\|_V&=\|\int_{0}^{t-s-\delta_0} \int_{t-r-s}^{t'-r-s}\|\A(t')e^{-\sigma\A(t')}(\A(t')-\A(t))h(r+s)\d\sigma
\d r \|_V
\\&\nonumber\leq 2M \|h\|_\infty\int_{0}^{t-s-\delta_0} \int_{t-r-s}^{t'-r-s}\|\A(t')e^{-\sigma\A(t')}\|_{\L(V',V)}\d\sigma
\d r 
\\\nonumber&\leq c \|h\|_\infty \int_{0}^{t-s-\delta_0} \int_{t-r-s}^{t'-r-s}\frac{1}{\sigma^2}\d\sigma
\d r 
\\&\leq\label{crucial step: eq6} c \|h\|_\infty \Big[\log\Big(\frac{t'-t}{\delta_0}+1\Big)+\log\Big(\frac{t-s}{t'-s}\Big) \Big]
\end{align}
\par\noindent Next, by (\ref{eq 3:Dini-condition}) and  Proposition \ref{lemma: estimations for general from}-(\ref{Eq3: estimation  semigroup}) and since $\omega: [0,T]\ra [0,\infty)$ is  bounded 
 \begin{align}
 \nonumber\|I_{3,\delta_0,2}\|_V&\leq \int_{0}^{t-s-\delta_0}\|\int_{0}^{t-s-r}e^{-((t-s-r)-\xi)\A(t)}\Big(\A(t)-\A(t')\Big)e^{-\xi\A(t')}\big(\A(t')-\A(s+r)\big)h(s+r)\d\xi\|_V\d r
 \\\nonumber&\leq c\|h\|_\infty\omega(t'-t)\int_{0}^{t-s-\delta_0}\int_{0}^{t-s-r}\frac{\omega(t-s-r)}{(t-s-r-\xi)^{\frac{1+\gamma}{2}}\xi^{\frac{1+\gamma}{2}}}\d\xi\d r
 \\\nonumber&\leq c\|h\|_\infty\omega(t'-t)\int_{0}^{t-s-\delta_0}(t-s-r)^{{-\gamma}}\Big(\int_{0}^{1}\frac{1}{(1-\xi)^{\frac{1+\gamma}{2}}\xi^{\frac{1+\gamma}{2}}}\d\xi\Big)\d r
 \\&\leq \label{crucial step: eq5} c\omega(t'-t)\|h\|_{\infty}.
 \end{align}
 
 \medskip
\noindent Again using (\ref{eq 3:Dini-condition}) and  Proposition \ref{lemma: estimations for general from}-(\ref{Eq3: estimation  semigroup}) one can estimates the last two terms $I_{3,\delta_0,3}$ and $I_{3,\delta_0,4}$ as follows
\begin{align}\nonumber\|I_{3,\delta_0,3}\|_V&\leq c\omega(t'-t)\|h\|_\infty\int_{0}^{t-s-\delta_0}(t-r-s)^{-\frac{1+\gamma}{2}}\d r
\\&\nonumber=c\omega(t'-t)\|h\|_\infty\Big[(t-s)^{\frac{1-\gamma}{2}}-\delta_0^{\frac{1-\gamma}{2}}\Big]
\\&\leq c\omega(t'-t)\|h\|_\infty\Big[T^{\frac{1-\gamma}{2}}\Big]
\leq \label{crucial step: eq7} c\omega(t'-t)\|h\|_\infty.
\end{align}
and
\begin{align}\nonumber\|I_{3,\delta_0,4}\|_V&\leq c\|h\|_\infty\int_{t-s-\delta_0}^{t'-s-\delta_0}\frac{\omega(t'-r-s)}{(t'-r-s)^{\frac{1+\gamma}{2}}}\d r 
\\&\nonumber\leq c\|h\|_\infty\sup_{\tau\in[0,T]}\frac{\omega(\tau)}{\tau^{\gamma/2}} \int_{t-s-\delta_0}^{t'-s-\delta_0}\frac{1}{(t'-r-s)^{\frac{1}{2}}}\d r 
\\&\leq \label{crucial step: eq8}c\|h\|_\infty\Big[(t'-t+\delta_0)^{1/2}-\delta_0^{1/2}\Big]. 
\end{align}
In the last inequality  we have used that $\displaystyle\sup_{\tau\in[0,T]}\frac{\omega(\tau)}{\tau^{\gamma/2}}<\infty.$ Thus we conclude that  
\begin{equation}
\|I_{3,\delta_0}\|_V\leq c\Big[\omega(t'-t)+(t'-t+\delta_0)^{1/2}-\delta_0^{1/2}+\log\Big(\frac{t'-t}{\delta_0}+1\Big)+\log\Big(\frac{t-s}{t'-s}\Big) \Big].
\end{equation} 
This estimates together with (\ref{crucial step: eq3}) and (\ref{crucial step: eq4}) proof the desired inequality.
\end{proof}

\begin{proposition}\label{main theorem1} Let $\left\{U(t,s) \big| \ (t,s)\in \Delta\right\}\subset\L(V)$ be defined by (\ref{evolution family}). Then for each fixed $s_0\in [0,T)$ the function  \[t\mapsto 
	U(t,s_0)\] is norm continuous on $(s_0,T]$ into $\L(V).$
\end{proposition}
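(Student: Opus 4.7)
\medskip
\noindent\textbf{Proof proposal.} The plan is to exploit the Acquistapace--Terreni representation~(\ref{Formula for the solution}), writing $U(t,s_0) = U_1(t,s_0) + U_2(t,s_0)$ with
\[
U_2(t,s_0)x = \bigl(P_{s_0}[U(\cdot,s_0)x]\bigr)(t) \qquad (x \in V),
\]
and showing that each of $U_1(\cdot,s_0)$ and $U_2(\cdot,s_0)$ is norm continuous on $(s_0,T]$ with values in $\L(V)$.

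The continuity of the first term is a direct consequence of Lemma~\ref{main theorem: step1}: for $s_0 < t \leq t' \leq T$ its right-hand side consists of the logarithmic term $\log\bigl((t'-s_0)/(t-s_0)\bigr)$, which vanishes as $t' \to t$ because $t > s_0$, and of $\omega(|t-t'|)$, which vanishes by continuity of $\omega$ at $0$.

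For the second term, the strategy is to apply Lemma~\ref{main theorem: step3} to the function $h = U(\cdot,s_0)x$, which belongs to $C([s_0,T];V)$ by \cite[Theorems~4.1, 4.2]{Ar-Mo15}. The lemma controls $\|(P_{s_0}h)(t') - (P_{s_0}h)(t)\|_V$ by a multiple of $\|h\|_\infty$, so to upgrade to an operator-norm bound I first establish the uniform estimate
\[
K_{s_0} := \sup_{r \in [s_0,T]} \|U(r,s_0)\|_{\L(V)} < \infty.
\]
That each $U(r,s_0)$ maps $V$ continuously into $V$ follows from a closed-graph argument (convergence in $V$ implies convergence in $H$, and $U(r,s_0)\in \L(H)$); the uniform bound then follows from the uniform boundedness principle applied to the pointwise $V$-valued continuous maps $r \mapsto U(r,s_0)x$. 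Combined with Lemma~\ref{main theorem: step3} applied to $h = U(\cdot,s_0)x$ for $\|x\|_V \leq 1$, this gives, for $s_0 < t \leq t' \leq T$ and $0 < \delta_0 < t - s_0$,
\[
\|U_2(t',s_0) - U_2(t,s_0)\|_{\L(V)} \leq c K_{s_0}\Bigl[\kappa_{s_0}(t'-t) + \int_0^{\delta_0}\frac{\omega(\sigma)}{\sigma^{1+\gamma/2}}\,\d\sigma\Bigr].
\]
Given $t_0 \in (s_0,T]$ and $\varepsilon>0$, I would first choose $\delta_0 \in \bigl(0,(t_0-s_0)/2\bigr)$ so that the integral is below $\varepsilon/(2cK_{s_0})$, which is possible by Remark~\ref{key rematk}, and then choose $\eta > 0$ so that $\kappa_{s_0}(r) < \varepsilon/(2cK_{s_0})$ for $r < \eta$. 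For $t,t'$ at distance at most $\min\bigl(\eta,(t_0-s_0)/2\bigr)$ from $t_0$, the required relation $\delta_0 < \min(t,t') - s_0$ is enforced, which yields norm continuity of $U_2(\cdot,s_0)$ at $t_0$.

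I expect the main delicate point to be justifying the uniform bound $K_{s_0}<\infty$, since this is what converts Lemma~\ref{main theorem: step3}'s pointwise-in-$x$ estimate into the operator-norm statement needed. The other technical subtlety is the constraint $\delta_0<\min(t,t')-s_0$ imposed by Lemma~\ref{main theorem: step3}, which dictates the order of the $\delta_0$/neighborhood choices above and confines the argument to $t_0>s_0$. By symmetry (the bound above depends only on $|t'-t|$), the same estimate handles the case $t'<t$, completing the proof.
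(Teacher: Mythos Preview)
Your proof is correct, but it takes a different route from the paper's. The paper does not apply Lemma~\ref{main theorem: step3} directly to $h=U(\cdot,s_0)x$; instead it invokes Lemma~\ref{main theorem: step2} to invert $I-P_{s_0}$ on $C([s_0,T];V)$ by the Neumann series, writes
\[
U(\cdot,s_0)x=\sum_{k\ge 0}P_{s_0}^{k}U_1(\cdot,s_0)x,
\]
and argues by induction that the estimate of Lemma~\ref{main theorem: step3} propagates to each power $P_{s_0}^{k}$ (using $\|P_{s_0}\|<1/4$ to sum). Continuity of $U_1(\cdot,s_0)$ is handled by Lemma~\ref{main theorem: step1}, as you do.

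Your approach is more direct: once $K_{s_0}<\infty$ is known, a single application of Lemma~\ref{main theorem: step3} with $h=U(\cdot,s_0)x$ finishes the argument. Your justification of $K_{s_0}<\infty$ via closed graph plus the uniform boundedness principle (using $U(\cdot,s_0)x\in C([s_0,T];V)$) is valid. The trade-off is that your bound on $K_{s_0}$ is non-constructive and a priori depends on $s_0$, whereas the Neumann-series representation in the paper yields an explicit bound $\|U(\cdot,s_0)\|_{\L(C([s_0,T];V))}\le (1-\|P_{s_0}\|)^{-1}\sup_{t}\|U_1(t,s_0)\|_{\L(V)}$ that is uniform in $s_0$; this uniform bound is precisely what the paper later uses to pass from separate continuity to joint continuity in Theorem~\ref{main result}.
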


\begin{proof} Let $s_0\in[0,T).$
 Lemma \ref{main theorem: step1} implies that 
 $I-P_{s_0}$ is  invertible on $\L(C(s_0,T,V))$ by the Neumann series. Therefore, $R_n\to 0$ as $ n\to\infty$ in $\L(C(s_0,T,V))$  where 
 \[R_n=\sum_{k=n+1}^{\infty}P_{s_0}^k,\ n\in\N\]
and, thanks to representation formula (\ref{Formula for the solution}),
\begin{align}\label{Neumann Series for the solution}
U(\cdot,s_0)x=(I-P_{s_0})^{-1}	U_1(\cdot,s_0)x=\sum_{k=0}^\infty P^k_{s_0}U_1(\cdot,s_0)x
\end{align}
 Next, proceeding by induction, we see that (\ref{main thm : main estimates}) in Lemma \ref{main theorem: step3} holds if we replace $P_{s_0}$ by $P^k_{s_0}$ for each $k\in \N\setminus\{0\}$ since  $\|P_{s_0}\|_{\L(C(s_0,T,V))}<1/4.$ The case $k=0$ is treated in Lemma \ref{main theorem: step1}.
 \par\noindent Finally, since
\begin{equation}\label{uniform estimate of U1}
\|U_1(t,s_0)x\|_V=\|e^{-(t-s_0)\A(t)}x\|_V\leq c\|x\|_V,\end{equation}
and  $\kappa_{s_0}(t-t')$ in (\ref{main thm : main estimates}) converges to $0$ as $t\to t'$ and taking into account Remark \ref{key rematk}, the proof follows by a 3-$\varepsilon$-argument.
\end{proof}
%
In the following proposition, we will proves that the mapping $U(\cdot,\cdot)$ is also norm continuous on the second variable.
\begin{proposition}\label{main theorem2}  Let $\left\{U(t,s) \big| \ (t,s)\in \Delta\right\}\subset\L(V)$ be defined by (\ref{evolution family}). Then for each fixed $t_0\in (0,T]$ the function  \[s\mapsto 
	U(t_0,s)\] is norm continuous on $[0,t_0)$ into $\L(V).$
\end{proposition}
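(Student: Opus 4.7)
The plan is to reduce norm continuity of $s\mapsto U(t_0,s)$ at a point $s_0\in[0,t_0)$ to a single quantitative estimate that couples the Arendt--Monniaux maximal regularity in $H$ with a smoothing property of type $H\to V$ for the evolution family $\U(\cdot,\cdot)$. I fix $s_0$ and work in a neighborhood $I_\delta:=[s_0-\delta,s_0+\delta]\cap[0,t_0)$ for $\delta>0$ so small that $t_0-s'\geq\eta>0$ for every $s'\in I_\delta$.

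First I would derive, for $s,s'\in I_\delta$ with $s<s'$ and $x\in V$, the integral identity
\[
U(t_0,s)x-U(t_0,s')x=-\int_s^{s'}\U(t_0,s')A(\tau)U(\tau,s)x\,\d\tau.
\]
This uses the semigroup property $U(t_0,s)=\U(t_0,s')U(s',s)$, together with $U(\cdot,s)x\in\MR_{\fra,s}$ (Arendt--Monniaux), which yields
\[
U(s',s)x-x=-\int_s^{s'}A(\tau)U(\tau,s)x\,\d\tau\qquad\text{in }H;
\]
one then pulls the bounded operator $\U(t_0,s')$ on $H$ through the Bochner integral.

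Second, I would establish the smoothing estimate
\[
\|\U(t_0,s')\|_{\L(H,V)}\leq\frac{c}{(t_0-s')^{1/2}}\qquad(s'\in[0,t_0)).
\]
The Lions energy estimate supplies $\|\U(\cdot,s')x\|_{L^2(s',T;V)}\leq c\|x\|$ for $x\in H$; a mean-value argument on the interval $[s',(s'+t_0)/2]$ yields a time $t^*$ with $\|\U(t^*,s')x\|_V\leq c\|x\|/(t_0-s')^{1/2}$. The factorization $\U(t_0,s')=U(t_0,t^*)\U(t^*,s')$, combined with the uniform bound $\|U(t_0,t^*)\|_{\L(V)}\leq c$ coming from $\MR_{\fra,t^*}\hookrightarrow C([t^*,T],V)$, then closes the argument.

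Finally, inserting the smoothing bound into the identity, applying Cauchy--Schwarz, and using the MR estimate $\|A(\cdot)U(\cdot,s)x\|_{L^2(s,T;H)}\leq c\|x\|_V$ gives
\[
\|U(t_0,s)x-U(t_0,s')x\|_V\leq\frac{c(s'-s)^{1/2}}{(t_0-s')^{1/2}}\|x\|_V
\]
uniformly in $x\in V$. On $I_\delta$ the right-hand side is $O(|s-s'|^{1/2})$, so it vanishes as $s,s'\to s_0$, proving norm continuity at $s_0$. The main obstacle I anticipate is the smoothing step and, more technically, showing that the identity derived in $H$ is legitimate in the sense of $V$-valued Bochner integration needed for the $\|\cdot\|_{\L(V)}$ estimate --- this reduces to $L^1$-integrability in $V$ of $\tau\mapsto\U(t_0,s')A(\tau)U(\tau,s)x$ on $(s,s')$, which is a consequence of the smoothing bound combined with MR.
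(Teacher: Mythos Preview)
Your argument is correct and takes a genuinely different route from the paper's proof.

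The paper works entirely inside the Neumann-series representation \eqref{Neumann Series for the solution}: it writes
\[
U(\cdot,s)-U(\cdot,s_0)=(I-P_s)^{-1}\bigl[U_1(\cdot,s)-U_1(\cdot,s_0)\bigr]+(I-P_s)^{-1}(P_s-P_{s_0})(I-P_{s_0})^{-1}U_1(\cdot,s_0),
\]
and then estimates $\|U_1(t,s)-U_1(t,s_0)\|_{\L(V)}\le c\log\bigl(\tfrac{t-s}{t-s_0}\bigr)$ and $\|(P_sU_1(\cdot,s)-P_{s_0}U_1(\cdot,s_0))(t)\|_V\le c\bigl[(t-s)^{1/2}-(t-s_0)^{1/2}\bigr]$ directly from the semigroup calculus and the Dini condition. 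Your approach instead combines $H$-maximal regularity (giving $A(\cdot)U(\cdot,s)x\in L^2$) with a quantitative $H\to V$ smoothing bound $\|\U(t_0,s')\|_{\L(H,V)}\le c(t_0-s')^{-1/2}$, yielding the explicit H\"older estimate $\|U(t_0,s)-U(t_0,s')\|_{\L(V)}\le c(t_0-s')^{-1/2}|s-s'|^{1/2}$. What your route buys is a clean conceptual picture and a quantitative rate in $s$; it also essentially anticipates Lemma~\ref{key Lemma} in sharpened form. The paper's route stays within the machinery already built for Proposition~\ref{main theorem1} and avoids the auxiliary smoothing step. Two small points you should make explicit: (i) the uniform bound $\sup_{(t,s)\in\overline\Delta}\|U(t,s)\|_{\L(V)}<\infty$ you need in the smoothing step is available at this stage from \eqref{Neumann Series for the solution}, \eqref{uniform estimate of U1} and $\|P_s\|<1/4$, not yet from Theorem~\ref{main result}; (ii) your mean-value time $t^*$ depends on $x$, but this is harmless precisely because that uniform $\L(V)$ bound holds for all $t^*$, and the a.e.\ membership $\U(\tau,s')x\in V$ lets you pick $t^*(x)$ in the full-measure set.
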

\begin{proof}  The proof is an easy consequence of the representation formula 
(\ref{Neumann Series for the solution}).
Indeed, let $0\leq s_0\leq s<t\leq T$ and $x\in V.$ Then (\ref{eq 1:Dini-condition}), (\ref{eq 3:Dini-condition}) and  Proposition \ref{lemma: estimations for general from}-(\ref{Eq3: estimation  semigroup}) imply 
	\begin{align}
	\|(P_sU_1(\cdot,s)x-P_{s_0}U_1(\cdot,s_0)x)(t)\|_V&\leq\int_{s_0}^s \|e^{-(t-r)\A(t)}(\A(t)-\A(r))U_1(r,s_0)x\|_V{\rm  d}r\\&\leq c \Big[(t-s)^{1/2}-(t-s_0)^{1/2}\Big]\|x\|_V.
	\end{align}
On the other hand, similarly as in the first part of the proof of Lemma \ref{main theorem: step1} we obtain 
	\begin{equation}\label{main theorem2: step 1}
	\|U_1(t,s)-U_1(t,s_0)\|_{\L(V)}\leq c\log\big(\frac{t-s}{t-s_0}\big).
	\end{equation}
Finally (\ref{Neumann Series for the solution}) implies
\begin{align*}
U(\cdot,s)-U(\cdot,s_0)&=(I-P_s)^{-1}\big[U_1(\cdot,s)-U_1(\cdot,s_0)\Big]+\big[(I-P_s)^{-1}(P_s-P_{s_0})(I-P_{s_0})^{-1}U_1(\cdot,s_0)
\end{align*}
from which the claims follows.
\end{proof}

 Combining Proposition \ref{main theorem1} and  Proposition \ref{main theorem2}, we conclude that $\{U(t,s)\mid (t,s)\in \Delta\}$ is norm continuous with value in $\L(V).$  In Section \ref{Sec3 Gipps property} below we will see  that $\U$ is also norm continuous with value in $\L(H)$ provided that $V$ is compactly embedded in $H$. 
\begin{theorem}\label{main result}
Assume that  $\fra$ satisfies  (\ref{eq:continuity-nonaut})-(\ref{eq 3:Dini-condition})  and $D(A(0)^{1/2})=V.$ Let $\{U(t,s):\ (t,s)\in\Delta\}$ given by (\ref{restriction of EVF on V}). Then the function 
\[(t,s)\mapsto U(t,s)\] is a norm continuous on $\Delta$ into $\L(V).$ 
\end{theorem}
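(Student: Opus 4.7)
The plan is to deduce joint norm continuity of $U(\cdot,\cdot)$ at an arbitrary $(t_0,s_0) \in \Delta$ (so $s_0 < t_0$) from the two separate continuity results, Propositions \ref{main theorem1} and \ref{main theorem2}, via a triangle inequality combined with a uniformity check. Choose $\eta > 0$ small enough that $s_0 + \eta < t_0 - \eta$, so that the closed rectangle $R := [t_0-\eta, t_0+\eta] \times [s_0-\eta, s_0+\eta]$ sits inside $\Delta$. For $(t,s) \in R$, write
\[
U(t,s) - U(t_0,s_0) = \bigl[U(t,s) - U(t_0,s)\bigr] + \bigl[U(t_0,s) - U(t_0,s_0)\bigr].
\]
The second bracket tends to $0$ in $\L(V)$ as $s \to s_0$, directly by Proposition \ref{main theorem2} applied at $t_0$. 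The crux is the first bracket: we need continuity in $t$ at $t_0$ that is \emph{uniform} in $s$ over the slice $\{s : |s - s_0| \le \eta\}$.

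To produce this uniformity I would revisit the proof of Proposition \ref{main theorem1} and track how $s$ enters the estimates. That argument writes $U(\cdot,s) = (I - P_s)^{-1} U_1(\cdot,s) = \sum_{k\ge 0} P_s^k U_1(\cdot,s)$, with $\|P_s\|_{\L(C(s,T,V))} < 1/4$ coming from Lemma \ref{main theorem: step2} (a bound that is uniform in $s$), and then invokes Lemma \ref{main theorem: step1} and Lemma \ref{main theorem: step3}. The only $s$-dependence in those bounds appears through quantities of the form $\log\frac{t_0-s}{t-s}$ and $\log\frac{t-s}{t_0-s}$; on $R$ one has $t-s \ge (t_0-s_0)/2 > 0$ uniformly, so these logarithms tend to $0$ uniformly in $s$ as $t \to t_0$. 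The remaining ingredients---the terms $\omega(|t-t_0|)$, $(|t-t_0|+\delta_0)^{1/2}-\delta_0^{1/2}$, $\log\bigl(1 + |t-t_0|/\delta_0\bigr)$, and the tail integral $\int_0^{\delta_0} \omega(\sigma) \sigma^{-1-\gamma/2}\,\mathrm{d}\sigma$ (controlled by (\ref{Key rmk2:Dini-condition}))---do not depend on $s$ at all. The uniform bound $\|U_1(t,s)\|_{\L(V)} \le c$ from (\ref{uniform estimate of U1}) then lets one sum the Neumann series uniformly in $s$, so the same $3$-$\varepsilon$-argument closing Proposition \ref{main theorem1} now yields
\[
\sup_{|s - s_0| \le \eta} \|U(t,s) - U(t_0,s)\|_{\L(V)} \longrightarrow 0 \quad \text{as } t \to t_0.
\]

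Combined with Proposition \ref{main theorem2}, this gives joint norm continuity of $U$ at $(t_0,s_0)$, and since $(t_0,s_0)$ was arbitrary the theorem follows. The main obstacle is precisely the uniformity check in the previous paragraph: every constant appearing in Lemmas \ref{main theorem: step1} and \ref{main theorem: step3} must be shown to be controllable in terms of the lower bound $t-s \ge (t_0-s_0)/2$ rather than the specific value of $s$, and the modulus $\kappa_s$ of Lemma \ref{main theorem: step3} must be verified to satisfy $\kappa_s(r) \to 0$ as $r \to 0$ uniformly for $s$ in any compact subinterval of $[0, t_0)$ bounded away from $t_0$. Note that the direct approach via the evolution property $U(t,s_0) = U(t,s)\,U(s,s_0)$ fails here, since it would require norm continuity of $U$ at the diagonal in $\L(V)$, which is not available.
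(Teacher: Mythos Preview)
Your argument is correct, but it takes a more laborious route than the paper's. The paper observes that once one has a uniform bound $\sup_{(t,s)\in\Delta}\|U(t,s)\|_{\L(V)}<\infty$ (which follows immediately from the Neumann series representation (\ref{Neumann Series for the solution}), the bound $\|P_s\|<1/4$, and (\ref{uniform estimate of U1})), joint continuity at $(t_0,s_0)$ follows by fixing an intermediate point $r_0\in(s_0,t_0)$ and writing, for $(t,s)$ close to $(t_0,s_0)$ with $s<r_0<t$,
\[
U(t,s)-U(t_0,s_0)=\bigl[U(t,r_0)-U(t_0,r_0)\bigr]U(r_0,s)+U(t_0,r_0)\bigl[U(r_0,s)-U(r_0,s_0)\bigr].
\]
The first bracket tends to $0$ by Proposition~\ref{main theorem1} (continuity in $t$ with $r_0$ \emph{fixed}), the factor $U(r_0,s)$ is uniformly bounded, and the second bracket tends to $0$ by Proposition~\ref{main theorem2} (continuity in $s$ with $r_0$ fixed). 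No uniformity in $s$ of the $t$-continuity is needed. Your final remark dismisses the evolution-law approach because you only considered the decomposition $U(t,s_0)=U(t,s)U(s,s_0)$, which indeed runs into the diagonal; the paper's trick of splitting at a \emph{fixed} interior point $r_0$ avoids this entirely. What your approach buys is an explicit modulus of joint continuity in terms of the data $\omega,\gamma,\delta_0$; what the paper's buys is brevity and the observation that separate continuity plus the algebraic evolution law plus uniform boundedness already force joint continuity away from the diagonal.
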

\begin{proof} Due to the evolution law $U(t,s)=U(t,r)U(r,s)$ it suffices to proof that $\{U(t,s):\ (t,s)\in\Delta\}$ is bounded on $\L(V),$ the norm continuity follows then from Proposition \ref{main theorem1} and Proposition \ref{main theorem2}. In the proof of Proposition \ref{main theorem1} we have seen that $U(t,s)$ is given by the Neumann series (\ref{Neumann Series for the solution}). This representation together with (\ref{uniform estimate of U1}) and the fact that $\|P_s\|_{\L(C(s,T,V))}\leq 1/4$ for all $0\leq s<T$ imply the claim.
	
\end{proof}
\section{Compact and Gibbs evolution family}\label{Sec3 Gipps property}
Throughout this section we adopt the notations and assumptions of Section \ref{Sec2 Norm continuity}. In this section we provide a further regularity properties of the evolution family $\{\U(t,s)\mid (t,s)\in\Delta\}.$ To this end we will introduce some definitions. 
\begin{definition}
The family $\{\U(t,s)\mid (t,s)\in\Delta\}$ is said to be a \textit{compact evolution family} on a Hilbert space $X\subseteq H$ if the operator $\U(t,s)\in \mathcal K(X)$ for every $(t,s)\in \Delta$ where $\mathcal K(X)$ denotes the space of compact operators on $X.$
\end{definition}
For strongly continuous semigroups, it is well known that compactness is a sufficient condition for norm continuity \cite[Lemma 4.22]{ENNA}. It is easy to verify that this is also true for strongly continuous evolution families. In the following we show that $\U(t,s)\in\mathcal K(H)$ and $U(t,s)\in\mathcal K(V)$ for every $(t,s)\in \Delta$ 
 whenever $V$ is compactly embedded in $H$. This would implies, in particular, that $(t,s)\mapsto \U(t,s)$ is norm continuous on $\Delta$ with value in $\L(H).$ 
 \medskip

 The following Lemma is essential for the results of this section.
 \begin{lemma}\label{key Lemma} Under the assumptions of Theorem \ref{main result}, we have $\U(t,s)\in \L(H,V)$ for every $(t,s)\in \Delta.$ 
 \end{lemma}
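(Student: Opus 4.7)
The plan is to prove the sharper quantitative estimate $\|\mathcal{U}(t,s)x\|_V \le c(t-s)^{-1/2}\|x\|_H$ for all $x\in H$ and $(t,s)\in\Delta$, which immediately gives $\mathcal{U}(t,s)\in \mathcal{L}(H,V)$. The argument proceeds in three stages: derive a quantitative bound on a dense subspace via the Acquistapace--Terreni formula, close it with a weakly singular Gronwall inequality, and then extend by density using weak compactness.

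First I would start with $x\in V$, where $U(t,s)x$ satisfies the representation (\ref{Formula for the solution}). Bounding the semigroup term in the $V$-norm via Proposition \ref{lemma: estimations for general from}, item (\ref{Eq3: estimation  semigroup}), applied with $\gamma=0$ (so that $V_\gamma'=H$), gives $\|e^{-(t-s)A(t)}x\|_V\le c(t-s)^{-1/2}\|x\|$. For the integrand, I would factor
\[
e^{-(t-r)A(t)}\colon V_\gamma'\to V
\quad\text{and}\quad
\A(t)-\A(r)\colon V\to V_\gamma',
\]
combining Proposition \ref{lemma: estimations for general from}, item (\ref{Eq3: estimation  semigroup}), with the Hölder condition (\ref{eq 3:Dini-condition}) to obtain the pointwise estimate $c\omega(t-r)(t-r)^{-(1+\gamma)/2}\|U(r,s)x\|_V$; then using $\omega(\sigma)\le C\sigma^{\gamma/2}$ from (\ref{eq 1:Dini-condition}) this simplifies to $C(t-r)^{-1/2}\|U(r,s)x\|_V$. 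Setting $f(t):=\|U(t,s)x\|_V$ yields the weakly singular Volterra inequality
\[
f(t) \le c(t-s)^{-1/2}\|x\| + C\int_s^t (t-r)^{-1/2}f(r)\,dr.
\]

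Next I would invoke Henry's generalized Gronwall lemma for weakly singular kernels to conclude $f(t)\le C'(t-s)^{-1/2}\|x\|$. Concretely, one iteration of the inequality plugs the inhomogeneity $(r-s)^{-1/2}$ into the integral and produces the Beta integral $B(1/2,1/2)=\pi$ as a bounded contribution; a Fubini exchange in the double integral then reduces the second iterate to an integrand $\int_s^t f(r)\,dr$ to which classical Gronwall applies, giving the singular estimate uniformly in $(t,s)\in\Delta$, the uniformity of all constants being ensured by Remark \ref{Remark: estimations for general from}.

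Finally, for arbitrary $x\in H$ I would pick $x_n\in V$ with $x_n\to x$ in $H$. By contractivity of $\mathcal{U}$ on $H$, $\mathcal{U}(t,s)x_n\to\mathcal{U}(t,s)x$ in $H$, while the previous step shows $\|\mathcal{U}(t,s)x_n\|_V \le C'(t-s)^{-1/2}\|x_n\|$ is bounded in the reflexive Hilbert space $V$. Extracting a weakly convergent subsequence and using that the continuous embedding $V\hookrightarrow H$ forces its weak limit to coincide with $\mathcal{U}(t,s)x$ yields $\mathcal{U}(t,s)x\in V$, together with the same singular bound by weak lower semicontinuity of the norm.

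The main obstacle is the weakly singular Gronwall step: ensuring that the $(t-s)^{-1/2}$ singularity of the inhomogeneity propagates through the self-referential inequality without blowing up. A secondary, more bookkeeping, issue is checking that all constants are independent of $(t,s)\in\Delta$, for which Remark \ref{Remark: estimations for general from} is the crucial input.
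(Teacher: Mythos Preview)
Your argument is correct and yields the sharper conclusion $\|\mathcal U(t,s)\|_{\L(H,V)}\le C(t-s)^{-1/2}$, but it is not the route the paper takes. The paper's proof is a short soft argument: for fixed $x\in H$ and $(t,s)\in\Delta$ one uses that $\mathcal U(\cdot,s)x\in L^2(s,T;V)$ (from Lions' theorem) to pick some $t_0\in(s,t)$ with $\mathcal U(t_0,s)x\in V$, and then the evolution law $\mathcal U(t,s)x=U(t,t_0)\mathcal U(t_0,s)x$ together with the already established fact that $U(t,t_0)$ maps $V$ to $V$ forces $\mathcal U(t,s)x\in V$; boundedness in $\L(H,V)$ is then obtained qualitatively from the Banach--Steinhaus theorem. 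Your approach trades this two-line argument for the Acquistapace--Terreni representation, a weakly singular Gronwall iteration, and a density/weak-compactness passage. The payoff is an explicit blow-up rate at the diagonal, which the paper's Banach--Steinhaus step cannot give; the cost is the extra machinery (in particular the singular Gronwall step, which does go through since for $x\in V$ one knows a priori that $r\mapsto\|U(r,s)x\|_V$ is continuous on $[s,T]$, so the second iterate is governed by an ordinary Gronwall inequality).
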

\begin{proof} We first proof that $\U(t,s)$ maps $H$ into $V$ for every $(t,s)\in \Delta.$ Since $\U(\cdot)$ provides the solution of (\ref{evolution equation u(s)=x}), there exists a null set $N\subset [0,T]$ such that  $\U(t,s)x\in V$ for every $s\in[0,T]$ and each $t\in]s,T]\setminus N.$ Let $x\in H$ and $(t,s)\in\Delta$ be arbitrary and fixed. Choose $t_0\in]s,T]\setminus N$ with $t_0<t.$ Then we obtain that
	\begin{equation}\label{ev equ proof}
	\U(t,s)x=U(t,t_0)\U(t_0,s)x\end{equation}
belongs to $V$ since $\U(t_0,s)\in V$ and $U(\cdot,\cdot)V\subset V.$ The boundedness of $\U$ from $H$ to $V$ follows by Banach Steinhaus Theorem using (\ref{ev equ proof}) and the strong continuity of $U(\cdot,s): [s,T]\lra V.$ 
\end{proof}
\begin{theorem}\label{compactness} Assume that  $\fra$ satisfies  (\ref{eq:continuity-nonaut})-(\ref{eq 3:Dini-condition})  and $D(A(0)^{1/2})=V.$ If $V$ is compactly embedded in $H$ then the evolution family $\{\U(t,s) \mid (t,s)\in \Delta\}$ is compact on $H,$  respectively on $V.$ In particular, $\{\U(t,s) \mid (t,s)\in \Delta\}$ is then norm continuous on $H.$
\end{theorem}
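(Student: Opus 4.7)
For the first assertion, namely compactness of $\U(t,s)$ on $H$ at every $(t,s)\in\Delta$, I would invoke Lemma \ref{key Lemma} directly: it gives $\U(t,s)\in\L(H,V)$, and composing with the compact inclusion $J:V\hookrightarrow H$ writes $\U(t,s):H\to H$ as a bounded operator followed by a compact one, hence compact.

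For compactness on $V$, fix $(t,s)\in\Delta$ and pick any $r\in(s,t)$. The evolution law $U(t,s)=\U(t,r)\,U(r,s)$ together with Lemma \ref{key Lemma} yields the factorization
\[
U(t,s)\;=\;\U(t,r)\circ J\circ U(r,s)\;:\;V\xrightarrow{U(r,s)}V\xrightarrow{J}H\xrightarrow{\U(t,r)}V,
\]
where $U(r,s)\in\L(V)$, $J$ is compact by hypothesis, and $\U(t,r)\in\L(H,V)$ by Lemma \ref{key Lemma}. The compact middle factor renders the whole composition compact on $V$.

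For the norm continuity of $\U$ on $\Delta$ with values in $\L(H)$, I would adapt the strategy behind the semigroup result \cite[Lemma 4.22]{ENNA}. Fix $(t_0,s_0)\in\Delta$ and choose $r\in(s_0,t_0)$. For $(t,s)$ close enough to $(t_0,s_0)$ one has $s<r<t$ and
\[
\U(t,s)-\U(t_0,s_0)\;=\;[\U(t,r)-\U(t_0,r)]\,\U(r,s_0)\;+\;\U(t,r)\,[\U(r,s)-\U(r,s_0)].
\]
In the first summand, $\U(r,s_0)$ is compact (by the first step) and $\U(t,r)\to\U(t_0,r)$ strongly as $t\to t_0$, so the ``strong-convergence times compact-on-the-right'' principle forces $\L(H)$-norm convergence to $0$. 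For the second summand I would pass to adjoints on the Hilbert space $H$: the adjoint evolution family is again strongly continuous, $\U(t,r)^*$ is compact, and $[\U(r,s)-\U(r,s_0)]^*\to 0$ strongly, so the same principle applied to $[\U(r,s)-\U(r,s_0)]^*\U(t,r)^*$ yields $\L(H)$-norm convergence, and dualising back concludes the argument.

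The main obstacle is precisely the second summand in the norm-continuity step: with the compact operator on the left of a strongly-convergent factor, ``compact times strong'' is not in general norm convergent, so the classical principle does not apply directly. The Hilbert-space setting is essential here, since only by passing to adjoints can one move the compact factor to the ``good'' side of the strongly-convergent one; once this manoeuvre is carried out, the remainder of the argument is routine bookkeeping using strong continuity and the factorizations already established.
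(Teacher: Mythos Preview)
Your compactness arguments are correct and are exactly what the paper has in mind: Lemma~\ref{key Lemma} together with the compact inclusion $J:V\hookrightarrow H$ gives $\U(t,s)\in\mathcal K(H)$ immediately, and inserting an intermediate point $r$ via the evolution law gives $U(t,s)\in\mathcal K(V)$. The paper's proof is a one-line reference to ``the evolution family law and Lemma~\ref{key Lemma}'', so you have simply written out what it leaves implicit.

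For the norm continuity, the paper says nothing beyond ``it is easy to verify'' that the semigroup implication \emph{compact $\Rightarrow$ norm continuous} carries over; you are supplying that verification, and the adjoint manoeuvre you describe is indeed the natural way to do it. One point deserves a word of justification: the strong continuity of $(t,s)\mapsto\U(t,s)^*$ is not automatic from that of $\U$, but follows because the time-reversed adjoint form $\tilde\fra(t;u,v):=\overline{\fra(T-t;v,u)}$ again satisfies (\ref{eq:continuity-nonaut})--(\ref{eq:Ellipticity-nonaut}), so Lions' theorem yields a strongly continuous evolution family $\tilde\U$ with $\U(t,s)^*=\tilde\U(T-s,T-t)$.

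There is, however, a small but genuine gap in your treatment of the second summand. You write $[\U(r,s)-\U(r,s_0)]^*\U(t,r)^*$ and invoke the principle ``strongly convergent $\times$ compact $\Rightarrow$ norm convergent''. That principle requires a \emph{fixed} compact operator on the right, whereas here $\U(t,r)^*$ depends on $t$, which is simultaneously moving to $t_0$. As stated, the argument establishes separate continuity in $t$ and in $s$, but not joint continuity at $(t_0,s_0)$. The fix is cheap: insert a second intermediate point $r'\in(r,t_0)$ and write
\[
\U(t,r)\bigl[\U(r,s)-\U(r,s_0)\bigr]=\U(t,r')\,\U(r',r)\bigl[\U(r,s)-\U(r,s_0)\bigr].
\]
Now $\|\U(t,r')\|_{\L(H)}\le 1$ uniformly, while in the remaining factor $\U(r',r)$ is a \emph{fixed} compact operator; taking adjoints and using the strong continuity of $\U(\cdot,\cdot)^*$ then gives $\bigl[\U(r,s)-\U(r,s_0)\bigr]^*\U(r',r)^*\to 0$ in norm, and the product follows. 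With this extra splitting your argument is complete.
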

\begin{proof} The first assertion follows from the evolution family law $(ii)$ and Lemma \ref{key Lemma}. The last one follows from the remarks above. 
\end{proof}
Finally, we introduce the concept of \textit{Gipps evolution family}. For a separable Hilbert space $X$ and $p\in[1, \infty)$ the \textit{$p$-Schatten class operators} space $\S_p(X)$ is given by 
\begin{equation}\label{pschatten clas}
\mathcal S_p(X):=\Big\{ T\in \mathcal K(X) \mid\  \|T\|_{\S{_p}}:=\|(s_n)_{n\in\N}\|_{\ell^p(\N)}<\infty\Big\},
\end{equation}
where $(s_n)_{n\in \N}$ is the sequence of singular values of $T,$ that is the sequence of eigenvalue of $|T|:=(TT^*)^{1/2}.$ This classes of operators have been introduced by Robert Schatten and John
von Neumann \cite{Neu+Sch} and are also known in the literature as \textit{von Neumann-Schatten classes}. The function $\|\cdot\|_{\S_p}$ is a norm on $\S_p(X)$ called $p$-Schatten norm and, $(\S_p, \|\cdot\|_p)$ is a Banach space. For $p=1$ we obtain the will known trace class and the Hilbert-Schmidt operators for $p=2$. The $\S_p(X)$ is a $\star$-ideal in $\L(X), $ i.e., if $T\in \S_p(X)$ and $\S\in L(X)$ then $T^*\in \S_p(X),$ $TS\in \S_p(X)$ and  $ST\in \S_p(X).$ Moreover, 
\[\S_1(X)\subset \S_p(X)\subseteq \S_q(X)\subset \mathcal K(X)\]
 for every $1\leq p\leq q<\infty$ since $p\mapsto \|T\|_{\S_p}$ is non-increasing. Further, applying H\"older inequality to $\|\cdot\|_{\S_p},$ one obtain that for $T\in \S_p(X)$ and $S\in \S(X)_q,$ we have $TS\in \S_r(X)$ for $r^{-1}=p^{-1}+q^{-1}.$ For all this and more details on $p$-Schatten class we refer e.g. to  \cite{Sim05,Za2}. 
 
 The following definition generalizes the concept of Gibbs semigroups introduced by Dietrich A. Uhlenbrock \cite{Uh71} (see also \cite{Za2}) to the non-autonomous situation. 
 \begin{definition} Let $X$ be separable Hilbert space. An evolution family $\{\U(t,s) \mid (t,s)\in \Delta\}$  is said to be a Gibbs evolution family on $X$ if each $(t,s)\in \Delta$ the operator $\U(t,s)$ is of trace class.
\end{definition} 
 In view of the properties of $\S_p(X)$ and the evolution family law, any evolution family $\U(t,s)$ for which there exists $p\in(1,\infty)$ such that $\U(t,s)\in \S_p(X)$ for every $(t,s)\in \Delta$ is a Gibbs evolution family.
 \begin{theorem}\label{theorem gibbs evolution family} Assume that  $\fra$ satisfies  (\ref{eq:continuity-nonaut})-(\ref{eq 3:Dini-condition})  and $D(A(0)^{1/2})=V.$ If the embedding $V\subset H$ is of $p$-Schatten class for some $p\in (1,\infty)$ then  $\{\U(t,s) \mid (t,s)\in \Delta\}$ is a Gibbs evolution family on $H$ and $V,$ respectively. 
 \end{theorem}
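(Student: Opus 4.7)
The plan is to exploit Lemma~\ref{key Lemma}, which provides the factorisation through $V$, together with the ideal and H\"older-type properties of the Schatten classes recalled just above the statement.

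First I would fix $(t,s)\in\Delta$ and observe that, by Lemma~\ref{key Lemma}, $\U(t,s)$ considered as an operator $H\to H$ factors as
\[
\U(t,s)\;=\; J\circ\widetilde\U(t,s),
\]
where $\widetilde\U(t,s)\in\L(H,V)$ denotes $\U(t,s)$ with codomain restricted to $V$, and $J:V\hookrightarrow H$ is the embedding. Since by hypothesis $J\in\S_p(V,H)$ and the Schatten classes form a two-sided ideal in the sense that composition with bounded operators preserves membership, one obtains $\U(t,s)\in\S_p(H)$ for every $(t,s)\in\Delta$.

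Next I would lower the Schatten index using the evolution law and the H\"older inequality $\|ST\|_{\S_r}\le\|S\|_{\S_p}\|T\|_{\S_q}$ with $r^{-1}=p^{-1}+q^{-1}$. Fixing $(t,s)\in\Delta$ and choosing an integer $n\ge p$, pick a partition $s=r_0<r_1<\dots<r_n=t$. The cocycle identity gives
\[
\U(t,s)\;=\;\U(r_n,r_{n-1})\,\U(r_{n-1},r_{n-2})\,\cdots\,\U(r_1,r_0),
\]
and each factor lies in $\S_p(H)$ by the previous paragraph. Iterated application of the H\"older inequality yields $\U(t,s)\in\S_{p/n}(H)\subset\S_1(H)$, so $\U(t,s)$ is of trace class. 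This settles the Gibbs property on $H$.

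For the statement on $V$, for any $s<r<t$ I would write
\[
U(t,s)\;=\;\widetilde\U(t,r)\circ J\circ U(r,s)\colon V\to V,
\]
where $U(r,s)\in\L(V)$ by Theorem~\ref{main result}, $J\in\S_p(V,H)$ by assumption, and $\widetilde\U(t,r)\in\L(H,V)$ by Lemma~\ref{key Lemma}. The ideal property again gives $U(t,s)\in\S_p(V)$, and exactly the same partition-plus-H\"older argument as above reduces the index to $1$, yielding $U(t,s)\in\S_1(V)$.

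The only delicate point is to be careful that the Schatten-class calculus is applied between (possibly) different Hilbert spaces ($J$ is not an operator on a single Hilbert space), but this is entirely standard for operators between separable Hilbert spaces and causes no real difficulty once the singular values of $J$ are interpreted via $J^{*}J$ on $V$. No additional regularity of $\fra$ beyond what is used for Lemma~\ref{key Lemma} is needed.
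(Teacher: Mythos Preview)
Your proof is correct and follows essentially the same route as the paper: factor $\U(t,s)$ through the embedding $J:V\hookrightarrow H$ using Lemma~\ref{key Lemma} to land in $\S_p$, then use the evolution law and the H\"older inequality for Schatten norms to reduce the index to~$1$. The paper's own proof is a two-line sketch that invokes Lemma~\ref{key Lemma} and ``the ideal property of $\S_p(X)$'', relying on the remark immediately preceding the theorem for the $\S_p\Rightarrow\S_1$ step and leaving the $V$ case implicit; you have simply written out those details.
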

 \begin{proof}
 From Lemma \ref{key Lemma} we have that $\U(t,s)\in \L(H,V)$ for each $(t,s)\in \Delta.$ The assertion follows then thanks to the ideal property of $\S_p(X)$
 \end{proof}
\section{The Laplacian with times dependent Robin boundary conditions}\label{S application}
Let $\Omega$ be a bounded domain of $\R^N$ with Lipschitz boundary $\Gamma.$ Denote by $\sigma$ the $(d-1)$-dimensional Hausdorff measure on $\Gamma.$  Let $T>0$ and $\alpha>1/4.$ Let 
\[\beta:[0,T]\times \Gamma \lra \R\]
be a bounded measurable function such that 
\[|\beta(t,x)-\beta(t,x)|\leq c|t-s|^\alpha\]
for some constant $c>0$ and every $t,s\in [0,T], x\in \Gamma.$ We consider the from $\fra:[0,T]\times V\times V\lra \C$ defined by 
\[\fra(t;u,v):=\int_\Omega\nabla u\cdot\nabla v {\rm d}x+\int_{\Gamma}\beta(t,\cdot)\restr{u}{\Gamma} \restr{v}{\Gamma} {\rm d}\sigma \]
where $u\to \restr{u}{\Gamma}: H^1(\Omega) \lra L^2(\Gamma,\sigma)$ is the trace operator. Fix $r_0\in(0,1/2)$ such that $r_0+1/2<2\alpha.$ Then the form $\fra$ satisfies (\ref{eq 3:Dini-condition}) with $\gamma:=r_0+1/2$ and $\omega(t)=t^\alpha$ since the trace operators is bounded from $H^{\gamma}(\Omega)$ with value in $H^{r_0}(\Gamma).$ The operator $A(t)$ associated with $\fra(t;\cdot,\cdot)$ on $H:=L^2(\Omega)$ is minus the Laplacian with time dependent Robin boundary conditions 
\[\partial_\nu u(t)+\beta(t,\cdot)u=0\ \text{ on } \Gamma. \]
Here $\partial_\nu$ is the weak normal derivative: let $v\in H^1(\Omega)$ such that $\Laplace u\in L^2(\Omega)$ then for $h\in L^2(\Gamma,\sigma)$ we have $\partial_\nu u:=h$ if and only if $\int_{\Omega} \nabla v\cdot \nabla w  {\rm d}x+\int_{\Omega} \Laplace v\cdot w  {\rm d}x=\int_{\Gamma} h w {\rm d}\sigma$ for all $w\in H^1(\Omega).$ Thus the domain of $A(t)$ is the set 
\[D(A(t))=\Big\{ u\in H^1(\Omega) \mid \Laplace u\in L^2(\Omega), \partial_\nu u(t)+\beta(t,\cdot)\restr{u}{\Gamma}=0 \Big\} \]
and for $u\in D(A(t)), A(t)u:=-\Laplace u.$ By Theorem \ref{compactness} the non-autonomous Cauchy problem 

 $$
 \left\{
 \begin{aligned}
 \dot {u}(t) - \Laplace u(t)& = f(t), \ u(0)\in H^1(\Omega)
 \\  \partial_\nu u(t)+\beta(t,\cdot){u}&=0 \ \text{ on } \Gamma
 \end{aligned} \right.
 $$
has $L^2$-maximal regularity in $H$ and is governed by a compact and thus norm continuous evolution family $U_R(\cdot,\cdot)$ since the embedding of $H^1(\Omega)\subset L^2(\Omega)$ is compact. Moreover, this embedding is also of $p$-Schatten class for all $p> N.$ We deduce then from Theorem \ref{theorem gibbs evolution family} that  $U_R(\cdot,\cdot)$  is in addition a Gibbs evolution family.

\end{document}